\def\.{\mskip\halfthinmuskip}
\newtheorem{theorem}{Theorem}[section]
\newtheorem{lemma}[theorem]{Lemma}
\theoremstyle{definition}
\newtheorem{remark}[theorem]{Remark}
\def\div{\mathop{\mathrm{div}}\nolimits}
\def\argmin{\mathop{\mathrm{argmin}}}
\def\Prob{\mathop{\mathrm{Prob}}}
\providecommand{\abs}[1]{\left|#1\right|}
\newcommand{\Keywords}[1]{\par\indent
{\small{\textbf{Key words and phrases.} \/} #1}}
\def\A{\mathcal{ A}}
\def\R{\mathbf{ R}}
\def\H{\mathcal{ H}}
\def\P{\mathcal{P}}
\def\E{\mathcal{E}}
\def\F{\mathcal{F}}
\def\H{\mathcal{H}}
\def\u{\textbf{u}}
\def\un#1{\mathrm{#1}}
\def\kT{\beta^{-1}}
\def\txi{\widetilde{\xi}}
\def\oxi{\overline \xi}
\let\e\varepsilon
\def\PSopt{P^*_{\mathrm{opt}}}
\def\Xint#1{\mathchoice
   {\XXint\displaystyle\textstyle{#1}}%
   {\XXint\textstyle\scriptstyle{#1}}%
   {\XXint\scriptstyle\scriptscriptstyle{#1}}%
   {\XXint\scriptscriptstyle\scriptscriptstyle{#1}}%
   \!\int}
\def\XXint#1#2#3{{\setbox0=\hbox{$#1{#2#3}{\int}$}
     \vcenter{\hbox{$#2#3$}}\kern-.5\wd0}}
\def\dashint{\Xint-}
\newlength{\boxwidth}
\date {\today}
\title{Conservative-dissipative approximation schemes for a generalized Kramers  equation}
\author{Manh Hong Duong$^1$ \and Mark A. Peletier$^{2,3}$ \and Johannes Zimmer$^1$}
\begin{document}
\maketitle
\begin{abstract}
We propose three new discrete variational schemes that capture the conservative-dissipative structure of a generalized Kramers equation. The first two schemes are single-step minimization schemes while the third one combines a streaming and a minimization step. The cost functionals in the schemes are inspired by the rate functional in the Freidlin-Wentzell theory of large deviations for the underlying stochastic system. We prove that all three schemes converge to the solution of the generalized Kramers equation.
\end{abstract}
\Keywords{Kramers equation, Gradient flows, Hamiltonian flows, variational principle, optimal transport.}
%\subjclass{35K15, 35A15,  35Q99, 37J99, 60J60.}

\footnotetext[1]{Department of Mathematical Sciences, University of Bath}
\footnotetext[2]{Department of Mathematics and Computer Sciences, Technische Universiteit Eindhoven}
\footnotetext[3]{Institute for Complex Molecular Systems, Technische Universiteit Eindhoven}

\section{Introduction}

\subsection{The Kramers equation}

In this paper we discuss the variational structure of a generalized \emph{Kramers equation},
\begin{equation}
\label{KRequation}
\partial_t \rho = - \div_q \rho\frac pm + \div_p \rho \nabla_q V+ \gamma\div_p \rho\nabla _p F + \gamma kT \Delta_p \rho, \qquad\text{in } \R^{2d}\times \R^+,
\end{equation}
which is the Fokker-Planck or Forward Kolmogorov equation of the stochastic differential equation
\begin{subequations}
\label{eq:SDE}
\begin{align}
dQ(t)&=\frac{P(t)}{m}dt\label{sto1},
\\dP(t)&=-\nabla V(Q(t))dt -\gamma \nabla F(P(t))dt + \sqrt{2\gamma kT}\, dW(t)\label{sto2}.
\end{align}
\end{subequations}
The system~\eqref{eq:SDE} describes the movement of a particle at position $Q$ and with momentum $P$ under the influence of three forces. One force is the derivative $-\nabla V$ of a background potential $V=V(Q)$, the second is a friction force $-\gamma \nabla F(P)$, and the third is a stochastic perturbation generated by a Wiener process~$W$. The parameter $m>0$ is the mass of the particle (so that the velocity is $P/m$), $\gamma$ is a friction parameter, $k$ is the Boltzmann constant, and $T$ is the temperature of the noise. A common choice for $F$ is $F(P) = P^2/2m$, which results in a linear friction force.

For a stochastic particle given by~\eqref{eq:SDE}, $\rho=\rho(t,q,p)$ characterizes the probability of finding the particle at time~$t$ at position $q$ and with momentum $p$. Equation~\eqref{KRequation} characterizes the evolution of this probability density over time. The three deterministic drift terms in~\eqref{eq:SDE} lead to convection terms in~\eqref{KRequation}, and the noise results in the final term in~\eqref{KRequation}. We use the notation $\div_q$ and similar to indicate that the differential operator acts only on one variable.

Both equations describe the behaviour of a \emph{Brownian particle with inertia}~\cite{Brown}, such as which is large enough to be distinguished from the molecules in the surrounding solvent, but small enough to show random behaviour arising from collisions with those same molecules. Both the friction force and the noise term arise from collisions with the solvent, and the parameter~$\gamma$ characterizes the intensity of these collisions. The parameter $kT$ measures the mean kinetic energy of the solvent molecules, and therefore characterizes the magnitude of the collision noise.  A major application of this system is as a simplified model for chemical reactions, and it is in this context that Kramers originally introduced it~\cite{Kramers40}.

\medskip

The aim of this paper is to discuss variational formulations for equation~\eqref{KRequation}. The theory of such variational structures took off with the introduction of Wasserstein gradient flows by~\cite{JordanKinderlehrerOtto97,JKO98} and of the energetic approach to rate-independent processes~\cite{MielkeTheilLevitas02,Mielke05a}. Both have changed the theory of evolution equations in many ways. If a given evolution equation has such a variational structure, then this property gives strong restrictions on the type of behaviour of such a system, provides general methods for proving well-posedness~\cite{AGS08} and characterizing large-time behaviour~(e.g.,~\cite{CarrilloMcCannVillani03}), gives rise to natural numerical discretizations (e.g.,~\cite{DuringMatthesMilisic10}), and creates handles for the analysis of singular limits (e.g.,~\cite{SandierSerfaty04, Stefanelli08, ArnrichMielkePeletierSavareVeneroni11TR}). Because of this wide range of tools,  the study of variational structure has important consequences for the analysis of an evolution equation.

\begin{remark}
A brief word about dimensions. We make the unusual choice of preserving the dimensional form of the equations, because the explicit constants help in identifying the modelling origin and roles of the different terms and effects, and these aspects are central to this paper. Therefore $Q$ and $q$ are expressed in $\un m$, $P$ and $p$ in $\un{kg\, m/s}$, $m$ in $\un{kg}$, $V$, $F$, and $kT$ in $\un J$, and $\gamma$ in $\un{kg/s}$. The density~$\rho$ has dimensions such that $\int\rho$ is dimensionless. This setup implies that the Wiener process has dimension~$\sqrt \un s$, in accordance with the formal property $dW^2 = dt$.
\end{remark}

\subsection{Variational evolution}

To avoid confusion between the Boltzmann constant and the integer $k$, from now on we define $\kT\colonequals kT$. The authors of \cite{JKO98} studied an equation that can be seen as a simpler, spatially homogeneous case of~\eqref{KRequation}, where $\rho = \rho(t,p)$:
\begin{equation}
\label{eq:ConvDiff}
\partial_t\rho=\gamma \kT\Delta_p\rho + \gamma \div_p\rho\nabla_p F.
\end{equation}
They showed that this equation is a gradient flow of the free energy
\[
\mathcal A_p(\rho) := \int_{\R^{d}} \Bigl[\rho F + \kT\rho\log \rho\Bigr]\, dp
\]
with respect to the Wasserstein metric. This statement can be made precise in a variety of different ways (see~\cite{AGS08} for a thorough treatment of this subject); for the purpose of this paper the most useful one is that the solution $t\mapsto \rho(t,p)$ can be approximated by the time-discrete sequence~$\rho_k$ defined recursively by
\begin{equation}\label{eq:JKOformulation}
\rho_k\in \argmin_{\rho} K_h(\rho,\rho_{k-1}), ~~~ K_h(\rho,\rho_{k-1})\colonequals \frac{1}{2 h} \frac1\gamma d(\rho, \rho_{k-1})^2+ \mathcal A_p(\rho).
\end{equation}
Here $d$ is the Wasserstein distance between two probability measures  $\rho_0(x)dx$ and $\rho(y)dy$ with finite second moment,
\begin{equation*}
d(\rho_0,\rho)^2\colonequals\inf_{P\in\Gamma(\rho_0,\rho)}\int_{\R^{d}\times\R^d}\! \abs{x-y}^2 P(dxdy),
\end{equation*}
where $\Gamma(\rho_0,\rho)$ is the set of all probability measures on $\R^{d}\times\R^d$ with  marginals $\rho_0$ and~$\rho$,
\begin{equation}\label{eq:coupling}
\Gamma(\rho_0,\rho)=\{P\in \P(\R^{d}\times \R^d): P(A\times \R^{d})=\rho_0(A), P(\R^{d}\times A)=\rho(A) \text{ for all Borel subsets } A\subset \R^{d}\}.
\end{equation}
%A straightforward calculation shows that the Wasserstein cost functional $C_W(x,y)=\abs{x-y}^2$ can be characterized by a minimization problem,
%\begin{equation}\label{eq:minimalvelocity}
%C_W(x,y)=h\inf \left\{\int_0^h \abs{\dot{c}(t)}^2\,dt: c\in C^1([0,h],\R^d) ~~ \text{such that}~~ c(0)=x, c(h)=y\right\}.
%\end{equation}
A consequence of this gradient-flow structure is that $\mathcal A_p$ decreases along solutions of~\eqref{eq:ConvDiff}.

\smallskip

Unfortunately, a convincing generalization of this gradient-flow concept and corresponding theory to equations such as the Kramers equation is still lacking. This is related to the mixture of both dissipative and conservative effects in these equations, which we now explain.

\subsection{A combination of conservative and dissipative effects}
\label{subsec:combination}
The full Kramers equation~\eqref{KRequation} is a mixture of  the dissipative behaviour described by~\eqref{eq:ConvDiff} and a Hamiltonian, conservative behaviour.  The conservative behaviour can be recognized by setting $\gamma=0$, thus discarding the last two terms in~\eqref{eq:SDE}; what remains in~\eqref{eq:SDE} is a deterministic Hamiltonian system with Hamiltonian energy $H(q,p) = p^2/2m + V(q)$. The evolution of this system is reversible and conserves~$H$. Correspondingly, the evolution of~\eqref{KRequation} with $\gamma=0$ also is reversible and conserves the expectation of $H$,
\[
\mathcal H(\rho) := \int_{\R^{2d}} \rho(q,p)H(q,p) \, dqdp.
\]

On the other hand, as suggested by the discussion in the previous section, the $\gamma$-dependent terms represent dissipative effects. In the variational schemes that we define below, a central role is played by the $(q,p)$-dependent analogue of $\mathcal A_p$,
\[
\mathcal A(\rho) := \int_{\R^{2d}} \Bigl[\rho(q,p) F(p) + \kT\rho(q,p)\log \rho(q,p)\Bigr]\, dqdp.
\]
Because of the special structure of~\eqref{KRequation}, the functional $\mathcal A$ does not decrease along solutions, but in the particular case $F(p) := p^2/2m$, a `total free energy' functional does: setting
\[
\mathcal E(\rho) \colonequals \mathcal A(\rho) + \int \rho V\, dqdp
= \int \Bigl[H + \kT\log \rho\Bigr]\rho\, dqdp,
\]
we calculate that
\begin{equation}
\label{eq:energydissipation}
\partial_t \mathcal E(\rho(t)) = -\gamma\int_{\R^{2d}}\frac1{\rho(t,q,p)} \Bigl|\rho(t,q,p)\frac pm+ \kT \nabla_p\rho(t,q,p)\Bigr|^2\, dqdp \leq 0.
\end{equation}
The choice $F(p)= p^2/2m$ is related to the fluctuation-dissipation theorem, and we comment on this in Section~\ref{subsec:discussion}.

%\red{MAP: The fact that $F=p^2/2m$ is so special leads me to wonder whether it wouldn't be better to write the paper completely for this case, and comment on the generalization in a remark somewhere \dots }
%The fact that the conservative terms do not break the monotonicity of $\mathcal A$ is common in many mixed systems; see e.g.\ the discussion on the `degeneracy conditions' in the GENERIC theory~\cite[Eq.~(1.4-5)]{Oettinger05}\footnote{Note that~\eqref{KRequation} is not a GENERIC system, since there is no conserved energy. It can, however, be trivially converted into a GENERIC system by supplementing $\rho$ with an additional scalar variable that captures the `missing' energy.}.

Because of the conservative, Hamiltonian terms, equation~\eqref{KRequation} is not a gradient flow, and an approach such as~\cite{JKO98} is not possible. In 2000 Huang~\cite{Hua00} proposed a variational scheme that is inspired by~\cite{JKO98}, but modified to account for the conservative effects, and in this paper we describe three more variational schemes for the same equation.

\subsection{Huang's discrete schemes for the Kramers equation}

The time-discrete variational schemes of Huang's and of this paper are best understood through the connection between \emph{gradient flows} on one hand and \emph{large deviations} on the other.
We have recently shown this connection for a number of systems~\cite{ADPZ11,Peletier2011,Laschos2011,DuongLaschosRenger12TR}, including~\eqref{eq:ConvDiff}.

The philosophy can be formulated in a number of ways, and here we choose a perspective based on the behaviour of a single particle.
We start with the simpler case of equation~\eqref{eq:ConvDiff} and the discrete  approximation~\eqref{eq:JKOformulation}.
Let $\{X_\epsilon\}_{\epsilon>0}$ be a rescaled $d$-dimensional Wiener process,
\begin{equation}
\label{eq:Xepsilon}
dX_\epsilon(t)=\sqrt{2\sigma\epsilon}\,dW(t),
\end{equation}
where $\sigma$ is a mobility coefficient.
If we fix $h>0$, then  by Schilder's theorem (e.g.~\cite[Th.~5.2.3]{DemboZeitouni98}), the process $\{X_\epsilon(t): t\in [0,h]\}$ satisfies a large-deviation principle
\[
\Prob\bigl(X_\epsilon(\cdot) \approx \xi(\cdot) \bigr) \sim \exp \Bigl[-\frac1\epsilon I(\xi)\Bigr],\qquad \text{as }\epsilon\to0,
\]
where the \emph{rate functional} $I\colon C([0,h];\R^d)\rightarrow \R\cup \{+\infty\}$ is given by
\begin{equation*}
I(\xi)=\frac{1}{4\sigma}\int_0^h\abs{\dot{\xi}(t)}^2\,dt.
\end{equation*}
The Wasserstein cost function $|x-y|^2$ can be written in terms of $I$ as
\begin{equation}\label{eq:minimalvelocity2}
 |x-y|^2 \;=\; 4h\sigma\,\inf \left\{I(\xi): \xi\in C^1([0,h],\R^d) ~~ \text{such that}~~ \xi(0)=x,\ \xi(h)=y\right\}.
\end{equation}
Hence the cost $|x-y|^2$ can be interpreted as the the probability that a Brownian particle goes from $x$ to $y$ in time $h$, in the sense of large deviations, and rescaled as to be independent of the magnitude of the noise $\sigma$.

The results of~\cite{ADPZ11,Peletier2011,DuongLaschosRenger12TR} concern a similar large-deviation analysis, but now for the empirical measure of a large number $n$ of particles. For this system the limit $n\to\infty$ plays a role similar to $\epsilon\to0$ in the example above. In~\cite{ADPZ11,Peletier2011,DuongLaschosRenger12TR}, it is shown that this rate functional is very similar to the right-hand side of~\eqref{eq:JKOformulation} in the limit $h\to0$. This result explains the strong connection between large deviations on one hand and the gradient-flow structure on the other.

However, the core of the argument of~\cite{ADPZ11,Peletier2011,DuongLaschosRenger12TR} is contained in the Schilder example~\eqref{eq:Xepsilon} and its connection~\eqref{eq:minimalvelocity2} to the Wasserstein cost. Hence we use this simpler point of view to generalize the approximation scheme~\eqref{eq:JKOformulation} to the Kramers equation. There are two different ways of doing this.

\medskip
\noindent\textbf{Approach 1 \cite{Hua00}.}
Instead of the inertia-less Brownian particle given by~\eqref{eq:Xepsilon}, we consider a particle with inertia satisfying
\begin{subequations}
\label{eq:SDE-Huang}
\begin{align}
dQ_\epsilon(t)&=\frac{P_\epsilon(t)}{m}\,dt,
\label{eq:SDE-Huang-A}
\\dP_\epsilon(t)&=\sqrt{2\epsilon\gamma \kT}\, dW(t),
\end{align}
\end{subequations}
which can formally also be written as
\[
m\frac {d^2}{dt^2} Q_\epsilon(t) = \sqrt{2\gamma \kT\epsilon} \, \frac{dW}{dt}(t).
\]
By the Freidlin-Wentzell theorem (e.g.~\cite[Th.~5.6.3]{DemboZeitouni98}), the process $Q_\epsilon(t)$ satisfies a similar large-deviation principle with rate functional $\overline I\colon C([0,h],\R^{d})\rightarrow \R\cup \{+\infty\}$  given by
\begin{equation*}
\overline I(\xi)=\frac{1}{4\gamma \kT}\int_0^h\abs{m\ddot{\xi}(t)}^2\,dt.
\end{equation*}
The comparison with~\eqref{eq:minimalvelocity2} suggests to define a cost functional $\overline{C}_h(q,p;q',p')$  in a similar way, i.e.,
\begin{align}
\overline{C}_h(q,p;q',p')&\colonequals h \;\inf \bigg\{\int_0^h\abs{m\ddot{\xi}(t)}^2\,dt: \xi\in C^1([0,h],\R^d) \quad \text{such that}\notag\\
&\hskip3cm (\xi,m\dot\xi)(0)=(q,p),\ (\xi,m\dot\xi)(h)=(q',p')\bigg\}\notag
%\label{realcostfunct}
\\
&=\abs{p'-p}^2
+12\left|\frac{m}{h}(q'-q)-\frac{p'+p}{2}\right|^2.
\label{def:Chbar}
\end{align}
The second formula follows from an explicit calculation of the minimizer. As above, the interpretation is that of the probabilistic `cost', that is, the large-deviations characterization of the probability of a path of~\eqref{eq:SDE-Huang}  connecting $(q,p)$ to $(q',p')$ over time $h$.
Note that $\overline C_h$ is not a metric, since it is not symmetric, and also $\overline C_h(q,p;q,p)=12|p|^2$ generally does not vanish. Therefore the Wasserstein `distance' $\overline W_h$ defined with $\overline C_h$ as cost is not a metric, but only an optimal-transport cost (see~\cite{Vil03} for an exposition on the theory of optimal transportation).

\medskip

Huang then defines the approximation scheme as

\medskip\noindent
\framebox{
\begin{minipage}{\boxwidth}
\noindent\textbf{Scheme 1~\cite{Hua00}}.
Given a previous state $\rho_{k-1}$, define $\rho_k$ as the  solution of the minimization problem
\begin{equation}
\label{scheme:Huang}
\min_\rho \;\frac1{2h}\frac1\gamma \overline W_h(\rho_{k-1},\rho) + \mathcal A(\rho) + \frac{2m}{\gamma h} \int_{\R^{2d} }\rho(q,p)V(q)\, dqdp,
\end{equation}
where $\overline W_h$ is the optimal-transport cost on $\R^{2d}$ with cost function~$\overline C_h$.
\end{minipage}
}

Huang proves~\cite{Hua00,Hua11} that the approximations generated by this scheme indeed converge to the solution of~\eqref{KRequation} as $h\to0$.

\subsection{Criticism}

Although Scheme 1 is approximately of similar form to~\eqref{eq:JKOformulation}, there are in fact important issues with this scheme:
\begin{enumerate}
\item In~\eqref{KRequation}, the dissipative effects are represented by the terms prefixed by $\gamma$, and the conservative effects by the the Hamiltonian terms $\div_q \rho p/m$ and $\div_p\rho\nabla V$. It would be natural to see these effects play separate roles in the variational formulation. However, in Scheme 1 the effects are mixed, since the final term in~\eqref{scheme:Huang} mixes conservative effects (represented by $V$ and $m$) with dissipative effects (the prefactor $\gamma$, and the role as driving force in a gradient-flow-type minimization).
\item The dependence on $h$ of the final term in~\eqref{scheme:Huang} adds to the confusion; since this parameter is an approximation parameter chosen independently from the actual system, the combination $\mathcal A + 2m/\gamma h \int \rho V$ can not be considered a single driving potential.
\item In fact, in the standard case $F(p)=p^2/2m$ the sum of $\mathcal A$ and $\int\rho V$ is a natural object, since it represents total free energy and decreases along solutions (see Section~\ref{subsec:combination}). Note how the coefficient in this sum is $1$ instead of $2m/\gamma h$.
\end{enumerate}
The way in which $V$ appears in Scheme 1 can be traced back to the fact that of the two conservative terms in~\eqref{KRequation} and~\eqref{eq:SDE}, only $P/m$ is represented in the definition of the cost $\overline C_h$, in the right-hand side of~\eqref{eq:SDE-Huang-A}; the term  $\nabla V$ is missing in~\eqref{eq:SDE-Huang}. Therefore the scheme has to compensate for the other term $\nabla V$ in a different manner.

These arguments lead us to pose the following question, which is the central topic of this paper:
\begin{quote}
\textbf{Can we construct an approximation scheme that respects the conservative-dissipative split?}
\end{quote}
The answer is `yes', and in the rest of this paper we explain how; in fact we detail three different schemes, corresponding to different ways of answering this question.

\subsection{The schemes of this paper}

We take a different approach than Huang did.

\noindent\textbf{Approach 2.}
To set up a new cost functional, we first return to the single-particle point of view, as in~\eqref{eq:Xepsilon} and~\eqref{eq:SDE-Huang}. We now take a particle whose behaviour is a combination of the two Hamiltonian terms in~\eqref{eq:SDE} and a noise term:
\begin{subequations}
\label{eq:SDE-scheme2}
\begin{align}
dQ_\epsilon(t)&=\frac{P_\epsilon(t)}{m}\,dt,\\
dP_\epsilon(t)&=-\nabla V(Q_\epsilon)\, dt + \sqrt{2\gamma \kT\epsilon}\, dW(t),
\label{eq:SDE-scheme2-B}
\end{align}
\end{subequations}
which again can  formally be written as
\[
m\frac {d^2}{dt^2} Q_\epsilon(t) + \nabla V(Q_\epsilon(t)) = \sqrt{2\gamma \kT\epsilon} \, \frac{dW}{dt}(t).
\]
Note how this system differs from~\eqref{eq:SDE-Huang} by the term involving $\nabla V$ in~\eqref{eq:SDE-scheme2-B}.

A very similar application of the Freidlin-Wentzell theorem states that $Q_\epsilon$ satisfies a large-deviation principle as $\epsilon\to0$ with rate function
\[
\widetilde I(\xi)=\frac{1}{4\gamma \kT}\int_0^h\bigl|m\ddot{\xi}(t)+ \nabla V(\xi(t))\bigr|^2\,dt.
\]
This leads to the following scheme.

\medskip
\noindent
\framebox{
\begin{minipage}{\boxwidth}
\noindent\textbf{Scheme 2a.}
We define the cost to be
\begin{multline}
\label{def:widetildeCh}
\widetilde{C}_h(q,p;q',p')\colonequals h\inf \bigg\{\int_0^h\bigl|m\ddot{\xi}(t)+ \nabla V(\xi(t))\bigr|^2\,dt: \xi\in C^1([0,h],\R^d) ~~ \text{such that}\\~~ (\xi,m\dot\xi)(0)=(q,p),\ (\xi,m\dot\xi)(h)=(q',p')\bigg\}.
\end{multline}
Given a previous state $\rho_{k-1}$, define $\rho_k$ as the  solution of the minimization problem
\begin{equation}
\label{scheme:2a}
\min_\rho \;\frac1{2h}\frac1\gamma \widetilde W_h(\rho_{k-1},\rho) + \mathcal A(\rho),
\end{equation}
where $\widetilde W_h$ is the optimal-transport cost on $\R^{2d}$ with cost function~$\widetilde C_h$.
\end{minipage}
}

\medskip

Note how now the term involving $V$ has disappeared from the minimization problem~\eqref{scheme:2a}. In Sections~\ref{section:EulerLagrangEquation}--\ref{section:Convergence} we show that this approximation scheme converges to the solution of~\eqref{KRequation} as $h\to0$.

\bigskip

For practical purposes it is inconvenient that the cost $\widetilde C_h$ in~\eqref{def:widetildeCh} has no explicit expression. It turns out that we may approximate $\widetilde C_h$ with an explicit expression and obtain the same limiting behaviour.

\medskip\noindent
\framebox{
\begin{minipage}{\boxwidth}
\noindent\textbf{Scheme 2b.}
Define
\begin{align}
\widehat{C}_h(q,p;q',p')&\colonequals h\inf\left\{\int_0^h\abs{m\ddot{\xi}(t)+\nabla V(q)}^2\,dt:~~ (\xi,m\dot{\xi})(0)=(q,p),\  (\xi,m\dot{\xi})(h)=(q',p')\right\}
\notag\\
&\stackrel{\eqref{def:Chbar}}={\abs{p'-p}^2}+12\left|\frac{m}{h}(q'-q)-\frac{p'+p}{2}\right|^2
+2h(p'-p)\cdot\nabla V(q)+h^2\abs{\nabla V(q)}^2
\notag\\
&=\abs{p'-p+h\nabla V(q)}^2+12\left|\frac{m}{h}(q'-q)-\frac{p'+p}{2}\right|^2.
\label{def:widehatCh}
\end{align}

Given a previous state $\rho_{k-1}$, define $\rho_k$ as the  solution of the minimization problem
\begin{equation}
\label{scheme:2b}
\min_\rho \;\frac1{2h}\frac1\gamma \widehat W_h(\rho_{k-1},\rho) + \mathcal A(\rho),
\end{equation}
where $\widehat W_h$ is the optimal-transport cost on $\R^{2d}$ with cost function~$\widehat C_h$.
\end{minipage}
}

Note how $\widehat C_h$  differs from~\eqref{def:widetildeCh} in that $\xi(t)$ is replaced by $q$ in $\nabla V$. This approximation is exact when $V$ is linear. We prove the convergence of solutions of Scheme 2b in Sections~\ref{section:EulerLagrangEquation}-\ref{section:Convergence}.

\bigskip

Neither of the costs $\widetilde C_h$ and $\widehat C_h$ gives rise to a metric, since they are asymmetric and do not vanish when $(q',p')=(q,p)$. It is possible to construct a two-step scheme with a symmetric cost and corresponding metric $W_h$.

\medskip\noindent
\framebox{
\begin{minipage}{\boxwidth}\noindent\textbf{Scheme 2c.}
Define
\begin{equation}
C_h(q,p;q',p')\colonequals|p'-p|^2+12\left|\frac{m}{h}(q'-q)-\frac{p'-p}{2}\right|^2+2m(q'-q)\cdot(\nabla V(q')-\nabla V(q)).\label{costfunct}
\end{equation}
Assume $\rho_{k-1}^h$ is given, define the single-step, backwards approximate streaming operator
\begin{equation}
\sigma_h(q,p)\colonequals\Bigl(q-h\frac{p}{m},p+h\nabla V(q)\Bigr).\label{Hamiltonmap}
\end{equation}

Given a previous state $\rho_{k-1}$, define $\rho_k$ in two steps.

\smallskip
\noindent\textbf{Hamiltonian step:} First determine $\mu_k^h(q,p)$ such that
 \begin{equation}
    \mu_k^h(q,p)\colonequals\sigma_h^{-1}(q,p)_{\sharp}\rho_{k-1}^h(q,p)\label{step1},
\end{equation}
where $\sharp$ denotes the push forward operator.

\smallskip
\noindent\textbf{Gradient flow step:}  Then determine $\rho^h_k$ that minimizes
\begin{equation}
 \min_\rho\; \frac{1}{2h}\frac1\gamma W_h(\mu_k^h,\rho)+\mathcal A(\rho)\label{step2},
\end{equation}
where $W_h$ is the metric on $\R^{2d}$ generated by the cost function~$C_h$.
\end{minipage}
}

\subsection{The main result and the relation to GENERIC}
\label{subsec:discussion}

The main theorem of this paper, Theorem~\ref{theo:maintheorem} below, states that the three new Schemes 2a-c are indeed approximation schemes for the Kramers equation~\eqref{KRequation}: the discrete-time approximate solutions constructed using each of these three schemes converge, as $h\to0$, to the unique solution of~\eqref{KRequation}.

This statement itself is a relatively uninteresting assertion: it states that the schemes are what we claim them to be, approximation schemes. The interest of this paper lies in the fact that these three schemes suggest a way towards a generalization of the theory of metric-space gradient flows, as developed in~\cite{AGS08}, to equations like~\eqref{KRequation} that combine dissipative with conservative effects.

Indeed, the full class of equations and systems that combines dissipative and conservative effects is extremely large. It contains the  Navier-Stokes-Fourier equations (which include heat generation and transport), systems modelling visco-elasto-plastic materials, relativistic hydrodynamics, many Boltzmann-type equations, and many other equations describing continuum-mechanical systems. In fact, the full class of systems covered by the GENERIC formalism~\cite{Oettinger05} is of this conservative-dissipative type, and indeed the Kramers equation is one of them.

The GENERIC class (General Equation for the Non-Equilibrium Reversible Irreversible Coupling) consists of equations for an unknown $x$ in a state space $\mathcal X$ that can be written as
\[
\dot x(t) = J(x) E'(x) + K(x) S'(x).
\]
Here $E,S:\mathcal X\to\R$ are functionals, and $J,K$ are operators. A GENERIC system is fully characterized by $\mathcal X$, $E$, $S$, $J$, and $K$. In addition, there are certain requirements on these elements, which include the \emph{symmetry conditions}
\[
\text{$J$\quad is antisymmetric\qquad and \qquad $K$\quad is symmetric and nonnegative},
\]
and the \emph{degeneracy} or \emph{non-interaction} conditions
\[
J(x)S'(x) = 0, \qquad K(x)E'(x) = 0, \qquad\text{for all }x\in \mathcal X.
\]
Because of these  properties, along a solution $E$ is constant and $S$ increases. In many systems the functionals $E$ and $S$ correspond to \emph{energy} and \emph{entropy}.

\medskip

When $F(p) = |p|^2/2m$, the Kramers equation~\eqref{KRequation} can be cast in this form.\footnote{In order to do this, the variable $\rho$ needs to be supplemented with an additional energy variable, that compensates for the gain and loss in the energy $\mathcal H$ as a result of the dissipative effects.} Because of this, the results of this paper strongly suggest that similar schemes can be constructed for arbitrary GENERIC systems. We leave this for future study.

\subsection{Conclusion and further discussion}

\noindent
We now make some further comments about the schemes of this paper.

\emph{Value of the three schemes.} Scheme 2a is in our opinion interesting because `it is the right thing to do'---it stays as close as possible to the underlying physics. However, its non-explicit nature makes it difficult to work with, as the calculations in the proof of Lemma~\ref{lemma:costproperties} illustrate. Scheme 2b is therefore useful as an approximation of Scheme 2a.
Scheme 2c has the advantage of being formulated in terms of a metric $W_h$, which suggests applicability of metric-space theory.

\emph{The linear-friction case $F(p)={|p|^2}/{2m}$}. The coefficient $\gamma kT$ in~\eqref{KRequation} and the coefficient $\sigma := \sqrt{2\gamma kT}$ in~\eqref{sto2} are obviously related by $\sigma^2=2\gamma kT$. When $F(p) = |p|^2/2m$, the coefficient $\gamma$ is also the coefficient of linear friction, and this relationship between $\sigma$, $\gamma$, and temperature is the one given by the fluctuation-dissipation theorem. This guarantees that the Boltzmann distribution
\begin{equation}
\label{def:rhoinfty}
\rho_\infty(q,p)=Z^{-1}\exp\left(-\frac{1}{kT}H(q,p)\right),
\end{equation}
is the unique stationary solution of \eqref{KRequation}. Moreover, the total free energy $\E$ is the relative entropy with respect to $\rho_\infty$, and it is a Lyapunov functional for the system, as is shown in~\eqref{eq:energydissipation}.

When $F$ does not have this specific form, but does have appropriate growth at infinity, then there still exists a unique stationary solution $\rho_\infty$, which however does not have the convenient characterization~\eqref{def:rhoinfty}. The relative entropy with respect to $\rho_\infty$ is then again a Lyapunov fucntional.

\emph{Connection to  ultra-parabolic equations.} If $V$ is linear, $V(q)=c\cdot q$, where $c\in \R^d$ is a constant vector,  then $\widehat{C}_h$ coincides with $\widetilde{C}_h$.
%\begin{equation*}
%\widehat{C}_h(q,p;q',p')=\widetilde{C}_h(q,p;q',p')=\abs{p'-p+hc}^2+12\left|\frac{m}{h}(q'-q)-\frac{p'+p}{2}\right|^2.
%\end{equation*}
In this case, $\widehat{C}_h=\widetilde C_h$ is closely related to the fundamental solution of the  equation
\begin{equation}
\label{eq:untraparabolic}
\partial_t\rho(t,q,p)=-\frac{p}{m}\cdot\nabla_q\rho(t,q,p)+c\cdot\nabla_p\rho(t,q,p)+\frac{\sigma^2}{2}\Delta_p \rho(t,q,p).
\end{equation}
Indeed, the fundamental solution $\Gamma_t(q,p;q',p')$ of \eqref{eq:untraparabolic} is given by
\begin{equation}
\label{eq:fundamentalsolution}
\Gamma_t(q,p;q',p')=\frac{\alpha_1}{t^{2d}}\exp\left(-\frac{\gamma}{ \sigma^2t}\widehat{C}_t(q,p;q',p')\right),
\end{equation}
where $\alpha_1$ is a normalization constant depending only on $d$. This fact is true for a much more general linear system and is related to the \emph{controllable} property of the system~\cite{DelarueMenozzi10}. The appearance of the rate functional from the Freidlin-Wentzell theory in~\eqref{eq:fundamentalsolution} consolidates the connection to the large deviation principle of our aprroach.

\emph{Connection to the isentropic Euler equations.} The cost function $\overline{C}_h$ has been used in \cite{GW09,Westdickenberg10} to study the system of isentropic Euler equations,
\begin{align*}
&\partial_t\rho+\nabla\cdot(\rho\u)=0,
\\&\partial_t\u+\u\cdot\nabla\u=-\nabla U'(\rho),
\end{align*}
where $U\colon [0,\infty)\longrightarrow \R$ is an internal energy density. We now formally show the relationship between two equations. Suppose that $\rho(t,q,p)$ is a solution of the Kramers equation~\eqref{KRequation} with $F(p)=\abs{p}^2/2m$. We define the macroscopic spacial density and the bulk velocity as
\begin{align}
&\widetilde{\rho}(t,q)=\int_{\R^d}\rho(t,q,p)dp,
\\& \u(t,q)=\frac{1}{\widetilde{\rho}(t,q)}\int_{\R^d}\frac{p}{m}\rho(t,q,p)dp.
\end{align}
Using the so-called moment method, we find that $(\widetilde{\rho},\u)$ satisfies the following damped Euler equations~\cite{CSR96,Chavanis03,Chavanis04},
\begin{align}
&\partial_t\widetilde{\rho}+\nabla \cdot(\widetilde{\rho}\u)=0
\\&\partial_t\u+\u\cdot\nabla\u=-\frac{\kT}{m}\frac{\nabla \widetilde{\rho}}{\widetilde{\rho}}-\frac{1}{m} \nabla V-\frac{\gamma}{m}\u.
\end{align}
If $\gamma=0$ and $V\equiv0$, these are the isentropic Euler equations with  internal energy $U(\rho)=\kT\rho\log\rho$. In \cite{GW09,Westdickenberg10}, the authors showed that the isentropic Euler equations may be interpreted as a second-order differential equation in the space of probability measures. They introduced a discrete approximation scheme, which is similar to Schemes 2a-b, using the cost functional $\overline{C}_h$. One future topic of research is to analyse whether one can approximate other second-order differential equations in the space of probability measures (e.g., the Schr\"{o}dinger equation~\cite{VonRenesse11}), using the cost function $\widetilde{C}_h$.

\emph{Connection to Ambrosio-Gangbo~\cite{AG08}.} The Hamiltonian step in Scheme 2c is a generalization of the implicit Euler method for a finite-dimensional Hamiltonian system to an infinite-dimensional case. It is also compatible with the concept of Hamiltonian flows in the Wasserstein space of probability measures defined by Ambrosio and Gangbo in~\cite{AG08}. Let $\H\colon \P_2({\R^{2d}})\rightarrow (-\infty, +\infty]$ and $\overline{\mu}\in  \P_2({\R^{2d}})$ be given. Then $\mu_t\colon [0,\infty)\rightarrow  \P_2({\R^{2d}})$ is called a Hamiltonian flow of $\H$ with the initial measure $\overline{\mu}$ if the following equation holds
 \begin{equation*}
 \frac{d}{dt}\mu_t=\mathrm{div}_{qp}(\mu_tJ\nabla \H(\mu_t)), ~~ \mu_0= \overline{\mu}, ~~ t\in (0,T),
 \end{equation*}
% \marginpar{This $J$ has a different meaning}
 where $J$ is a skew-symmetric matrix and $\nabla \H(\mu_t)$ is the gradient of the Hamiltonian $\H$ at $\mu_t$ (Definition 3.2 in~\cite{AG08}). In particular, when $\H(\rho)=\int_{\R^{2d}}\left(\frac{p^2}{2m}+V(q)\right)\rho(q,p)dqdp$ then $\nabla \H =(\nabla_qV(q),\frac{p}{m})^T$.
 According to Lemma 6.2 in~\cite{AG08} when $\overline{\mu}$ is regular, a Hamiltonian flow in a small interval $(0,h)$ is constructed by pushing forward the initial measure $\overline{\mu}$ under the map $\Phi(t,\cdot)=(q(t),p(t))$ which is the solution of the system~\eqref{eq:SDE} (with $\gamma=0$). In the Hamiltonian step we approximate this system by the implicit Euler method and define $\mu_k^h$ to be the end point $\mu(h)$.

%\emph{Connection to \cite{Leo12}. }A similar relationship between the cost functional and the rate functional has been studied recently by L\'{e}onard in \cite{Leo12}. The study leads to a discovery of a connection between the Schr\"{o}dinger problem and the Monge-Kantorovich problem. The link between the results of the present paper and \cite{Leo12} is still open.\marginpar{Read this}

\subsection{Overview of the paper}

The paper is organized as follows. In Section~\ref{section:results}, we describe our assumptions and state the main result. Section~\ref{sec:costproperties} establishes some properties of the three cost functions. The proof of the main theorem is given  in Sections~\ref{section:EulerLagrangEquation} to \ref{section:Convergence}. In Section~\ref{section:EulerLagrangEquation}, we establish the Euler-Lagrange equations for the minimizers in three schemes. In Section~\ref{section:priorestimate}, we prove the boundedness of the second moments and the entropy functional. Finally, the convergence result is given in  Section~\ref{section:Convergence}.

\newpage

\section{Assumptions and main result}
\label{section:results}

Throughout the paper we make the following assumptions.
\begin{equation}
V\in C^3(\R^d)\text{ and }F\in C^2(\R^d),\  F(x)\geq 0 \text{ for all }x\in \R^d.\label{assumpt1}
\end{equation}
There exists a constant $C>0$ such that for all $z_1,z_2\in \R^d$
\begin{subequations}
\label{assumpt}
\begin{align}
&\frac1C\abs{z_1-z_2}^2\leq (z_1-z_2)\cdot (\nabla V(z_1)-\nabla V(z_2)),\label{assumpt2}
\\& \abs{\nabla V(z_1)-\nabla V(z_2)}\leq C\abs{z_1-z_2},\label{assumpt3}
\\& \abs{\nabla F(z_1)-\nabla F(z_2)}\leq C\abs{z_1-z_2},\label{assumpt4}\\
&\abs{\nabla^2 V(z_1)}, \abs{\nabla^3 V(z_1)} \leq C.\label{assumpt5}
\end{align}
\end{subequations}
Note that~\eqref{assumpt2} implies that $V$ increases quadratically at infinity, and therefore $V$ achieves its minimum. Without loss of generality we assume that this minimum is at the origin, which implies the estimate
\begin{equation}
\label{assumpt6}
|\nabla V(z)|\leq C|z|.
\end{equation}

As we remarked in the Introduction, we work in the dimensional setting, and keep all the physical constants in place, in order to make the physical background of the expressions clear. We make an important exception, however, for inequalities of the type above; here the constants $C$ can have any dimension, and we will group terms on the right-hand side of such estimates without taking their dimensions into account. This can be done without loss of generality, since we do not specify the generic constant $C$, and this constant will be allowed to vary from one expression to the next.

We only consider probability measures on $\R^{2d}$ which have a Lebesgue density, and we often tacitly  identify a probability measure with its density. We denote by $\P_2(\R^{2d})$ the set of all probability measures on $\R^d\times\R^d$ with finite second moment,
\begin{equation*}
\P_2(\R^{2d}):=\left\{\rho\colon \R^d\times\R^d\rightarrow[0,\infty) \text{ measurable}, \int_{\R^{2d}}\rho(q,p)dqdp=1, M_2(\rho)<\infty\right\},
\end{equation*}
where
\begin{equation}
\label{def:M2}
M_2(\rho)=\int_{\R^{2d}}(\gamma^2|q|^2+|p|^2) \rho(q,p)\,dqdp.
\end{equation}

With these assumptions, the functionals $\A$ and $\E$ introduced in the introduction are well-defined in $\P_2(\R^{2d})$. Moreover, the following two lemmas are now classical (see, e.g.,~\cite[Theorem 1.3]{Vil03},  \cite[Proposition 4.1]{JKO98}, and \cite[Lemma 4.2]{Hua00}). Let $C^*_h$ be one of $\widetilde C_h$, $\widehat{C}_h$, or $C_h$, defined in~\eqref{def:widetildeCh}, \eqref{def:widehatCh}, and~\eqref{costfunct}, with corresponding optimal-transport cost functional $W^*_h$.

\begin{lemma}\label{existoptimalplan}
Let $\rho_0,\rho\in \P_2(\R^{2d})$ be given. There exists a unique optimal plan $\PSopt\in \Gamma(\rho_0,\rho)$ such that
\begin{equation}
W^*_h(\rho_0,\rho)=\int_{\R^{4d}}C^*_h(q,p;q',p')\PSopt(dqdpdq'dp').
\end{equation}
\end{lemma}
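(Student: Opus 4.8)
The plan is to separate the two assertions, since existence is a routine application of the direct method of the calculus of variations while uniqueness is where the structure of the three cost functions enters.

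\textbf{Existence.} First I would note that $\Gamma(\rho_0,\rho)$ is nonempty (it contains the product $\rho_0\otimes\rho$) and, because the two marginals are fixed and each is tight, that $\Gamma(\rho_0,\rho)$ is itself tight, hence weakly sequentially compact by Prokhorov's theorem. Each cost grows at most quadratically: for $\widehat C_h$ this is clear from~\eqref{def:widehatCh} together with the linear bound $|\nabla V(z)|\le C|z|$ in~\eqref{assumpt6}, for $C_h$ the coupling term is controlled by $2mC|q'-q|^2$ via~\eqref{assumpt3}, and for $\widetilde C_h$ the bound is one of the properties I would extract from Lemma~\ref{lemma:costproperties}. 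Since $M_2(\rho_0),M_2(\rho)<\infty$, this gives $\int C^*_h\,d(\rho_0\otimes\rho)<\infty$, so the infimum defining $W^*_h$ is finite. The integrand is nonnegative and continuous for $\widehat C_h$ and $C_h$ (for $C_h$ nonnegativity of the coupling term is~\eqref{assumpt2}), and lower semicontinuous for $\widetilde C_h$ (again from Lemma~\ref{lemma:costproperties}); hence $P\mapsto\int C^*_h\,dP$ is weakly lower semicontinuous. Passing to a weak limit of a minimizing sequence of plans then produces an optimal plan $\PSopt$.

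\textbf{Uniqueness for $\widehat C_h$.} Here I would exploit the explicit form~\eqref{def:widehatCh}. Writing $x=(q,p)$ and $y=(q',p')$, one has
\begin{equation*}
\widehat C_h(x,y)=\bigl|\Psi(y)-\Phi(x)\bigr|^2,\quad
\Phi(q,p)=\Bigl(p-h\nabla V(q),\ \sqrt{12}\bigl(\tfrac mh q+\tfrac p2\bigr)\Bigr),\quad
\Psi(q',p')=\Bigl(p',\ \sqrt{12}\bigl(\tfrac mh q'-\tfrac{p'}2\bigr)\Bigr).
\end{equation*}
Both maps are diffeomorphisms of $\R^{2d}$: $\Psi$ is linear and invertible, while inverting $\Phi$ reduces to solving $2\tfrac mh q+h\nabla V(q)=\mathrm{const}$, which is uniquely solvable because the Jacobian $2\tfrac mh I+h\nabla^2V(q)$ is uniformly positive definite by~\eqref{assumpt2}. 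Pushing the marginals forward by $\Phi$ and $\Psi$ converts the $\widehat C_h$-problem into the quadratic-cost problem between the absolutely continuous measures $\Phi_\#\rho_0$ and $\Psi_\#\rho$, and this correspondence is a bijection between $\Gamma(\rho_0,\rho)$ and $\Gamma(\Phi_\#\rho_0,\Psi_\#\rho)$ preserving cost. Brenier's theorem then gives a unique optimal plan, concentrated on a graph, and pulling it back yields uniqueness of $\PSopt$.

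\textbf{Uniqueness for $C_h$ and $\widetilde C_h$.} For these the coupling term is genuinely nonlinear and the change of variables no longer diagonalizes the cost, so instead I would verify the twist (Spence--Mirrlees) condition: for fixed $x$ the map $y\mapsto\nabla_x C^*_h(x,y)$ is injective. For $C_h$ this reduces to nonsingularity of a $2d\times2d$ block Jacobian assembled from $I$, $\tfrac mh I$ and $\nabla^2V$, which follows from the two-sided bound $\tfrac1C I\le\nabla^2V\le CI$ furnished by~\eqref{assumpt2} and~\eqref{assumpt5}; the regularity $V\in C^3$ in~\eqref{assumpt1} guarantees enough smoothness of the cost. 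For $\widetilde C_h$ I would instead use the differentiability and the structural identities for the path-infimum recorded in Lemma~\ref{lemma:costproperties}. Since $\rho_0$ is absolutely continuous, the Gangbo--McCann theorem then forces any optimal plan to be supported on the graph of a single map, which gives uniqueness.

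\textbf{Main obstacle.} The delicate point is uniqueness rather than existence. For $\widehat C_h$ the reduction to a quadratic cost is clean, but for the symmetric cost $C_h$, and above all for $\widetilde C_h$ which has no closed form, verifying the twist condition is where the work lies. In particular the $\widetilde C_h$ case compels reliance on the regularity of the path-minimizer established in Lemma~\ref{lemma:costproperties}, and I would expect the injectivity of $y\mapsto\nabla_x\widetilde C_h(x,y)$ to be the technical heart of the argument.
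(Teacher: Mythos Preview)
The paper does not actually prove this lemma: immediately before stating Lemmas~\ref{existoptimalplan} and~\ref{wellposedness} it declares them ``now classical'' and refers to \cite[Theorem~1.3]{Vil03}, \cite[Proposition~4.1]{JKO98}, and \cite[Lemma~4.2]{Hua00}. So your proposal already goes well beyond the paper's treatment. Your existence argument is precisely the content of the Villani reference and is correct; your reduction of the $\widehat C_h$-problem to a quadratic transport problem via the diffeomorphisms $\Phi,\Psi$ is correct and is a clean way to invoke Brenier that the paper does not spell out.

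Two caveats on the uniqueness side. For $C_h$ you say twist ``reduces to nonsingularity of a $2d\times 2d$ block Jacobian'', but nonsingularity of $\nabla^2_{xy}C_h$ only gives \emph{local} injectivity of $y\mapsto\nabla_xC_h(x,y)$, not the global injectivity Gangbo--McCann requires. Global injectivity does hold here, but you need to actually use the uniform convexity~\eqref{assumpt2} (take the inner product of the difference equation with $q'_1-q'_2$), not merely the two-sided bound on $\nabla^2V$. For $\widetilde C_h$ the issue is sharper: Lemma~\ref{lemma:costproperties} records only $\nabla_{q'}\widetilde C_h$ and $\nabla_{p'}\widetilde C_h$, with remainders $\sigma_h,\tau_h$ bounded by $Ch(\widetilde C_h+N+1)$. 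Since this bound grows with $\widetilde C_h$ and $N$, it does not automatically upgrade injectivity of the explicit leading part to injectivity of the full map on all of $\R^{2d}$. You are right to flag this as the technical heart, but Lemma~\ref{lemma:costproperties} as stated does not hand it to you; a separate argument (or finer control of $\sigma_h,\tau_h$) is needed, and the paper, by citing \cite{Hua00} rather than arguing, sidesteps the point entirely.
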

\begin{lemma}\label{wellposedness}
Let $\rho_0\in\P_2(\R^{2d})$ be given. If $h$ is small enough, then the minimization problem
\begin{equation}
\min_{\rho\in\P_2(\R^{2d})}\frac{1}{2h}\frac1\gamma W^*_h(\rho_0,\rho)+\mathcal A(\rho),
\end{equation}
has a unique solution.
\end{lemma}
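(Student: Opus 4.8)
The plan is to apply the direct method of the calculus of variations to the functional
\[
\mathcal F(\rho) \colonequals \frac{1}{2h}\frac1\gamma W^*_h(\rho_0,\rho)+\mathcal A(\rho)
\]
on the convex set $\P_2(\R^{2d})$, following the blueprint of~\cite[Proposition~4.1]{JKO98}. First I would show that $\mathcal F$ is bounded below and coercive, in the sense that its sublevel sets have bounded $M_2$. Since $F\ge0$, the only term of $\mathcal A$ that can be negative is the entropy $\kT\int\rho\log\rho$; comparing $\rho$ with a Gaussian reference density $g_\alpha\propto\exp(-\alpha(\gamma^2|q|^2+|p|^2))$ and using nonnegativity of the relative entropy $\int\rho\log(\rho/g_\alpha)\ge0$ gives, for every $\alpha>0$, a bound $\int\rho\log\rho\ge -C_\alpha-\alpha M_2(\rho)$. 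On the other hand, the quadratic lower bounds on the cost functions established in Section~\ref{sec:costproperties}, together with~\eqref{assumpt6} to control the $\nabla V$-contributions, should yield an estimate $M_2(\rho)\le A(h)\,W^*_h(\rho_0,\rho)+B(h)$, where $B(h)$ depends on $M_2(\rho_0)$. Choosing $\alpha$ small enough (depending on the fixed $h$) so that $\kT\alpha<(2h\gamma A(h))^{-1}$, the negative part of the entropy can be absorbed into the transport term, and any sublevel set $\{\mathcal F\le K\}$ is then seen to have bounded second moment.

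Next I would take a minimizing sequence $(\rho_n)$. By the coercivity just established, $\sup_n M_2(\rho_n)<\infty$, so by Prokhorov's theorem a subsequence converges narrowly to some $\rho\in\P_2(\R^{2d})$, and the uniform second-moment bound rules out loss of mass at infinity, so $\rho$ remains a probability density. It then remains to verify that each term of $\mathcal F$ is lower semicontinuous along narrow convergence: the entropy $\rho\mapsto\int\rho\log\rho$ is narrowly lsc (as in~\cite{JKO98,AGS08}); $\rho\mapsto\int\rho F$ is lsc because $F$ is continuous and nonnegative; and $\rho\mapsto W^*_h(\rho_0,\rho)$ is lsc because $C^*_h$ is nonnegative and lower semicontinuous (continuous for $\widehat C_h$ and $C_h$, and lsc for $\widetilde C_h$ by the properties of Section~\ref{sec:costproperties}), using the standard stability of the optimal-transport cost under narrow convergence of the marginals. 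Hence $\mathcal F(\rho)\le\liminf_n\mathcal F(\rho_n)=\inf\mathcal F$, and $\rho$ is a minimizer.

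Finally, uniqueness follows from strict convexity of $\mathcal F$ on the convex set $\P_2(\R^{2d})$. The map $\rho\mapsto\int\rho\log\rho$ is strictly convex along linear interpolations, since $s\mapsto s\log s$ is strictly convex; the map $\rho\mapsto\int\rho F$ is linear; and $\rho\mapsto W^*_h(\rho_0,\rho)$ is convex, which I would prove by interpolating optimal plans: if $\rho=(1-t)\rho_1+t\rho_2$ and $P_i\in\Gamma(\rho_0,\rho_i)$ are the optimal plans supplied by Lemma~\ref{existoptimalplan}, then $(1-t)P_1+tP_2\in\Gamma(\rho_0,\rho)$, whence $W^*_h(\rho_0,\rho)\le(1-t)W^*_h(\rho_0,\rho_1)+tW^*_h(\rho_0,\rho_2)$. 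Thus $\mathcal F$ is strictly convex and the minimizer is unique.

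The main obstacle is the coercivity step, precisely because the costs $C^*_h$ are not metrics: there is no triangle inequality to bound $M_2(\rho)$ by $M_2(\rho_0)$ plus a multiple of $W^*_h(\rho_0,\rho)$, so the estimate $M_2(\rho)\le A(h)\,W^*_h+B(h)$ must be extracted directly from an optimal plan, handling the cross terms in $\widehat C_h$ and $C_h$ (where the contributions of $|p'|^2$ and $|q'|^2$ are entangled) and the $\nabla V$-terms via~\eqref{assumpt2}--\eqref{assumpt6}. For $\widetilde C_h$ the situation is more delicate still, since this cost is defined only implicitly through a boundary-value problem; here the hypothesis that $h$ be small is what guarantees that $\widetilde C_h$ is finite, lower semicontinuous, and satisfies the requisite two-sided quadratic bounds, and these are exactly the cost properties I would import from Section~\ref{sec:costproperties}.
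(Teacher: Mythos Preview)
Your proposal is correct and follows precisely the approach the paper has in mind: the paper does not give its own proof of this lemma but declares it ``now classical'' and cites \cite[Proposition~4.1]{JKO98} and \cite[Lemma~4.2]{Hua00}, whose arguments are exactly the direct-method scheme you outline (coercivity via the second-moment/entropy trade-off, tightness, lower semicontinuity of each term, and strict convexity of the entropy for uniqueness). Your identification of the coercivity step as the place where ``$h$ small'' enters, via the pointwise inequalities of Section~\ref{sec:costproperties} that let one absorb the $N(q',p')$ terms, is accurate.
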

\noindent
These lemmas imply that Schemes 2a--c are well-defined.

Next, we make the definition of a weak solution precise.
A function $\rho\in L^1(\R^+\times\R^{2d})$ is called a weak solution of equation~\eqref{KRequation} with initial datum $\rho_0\in \P_2(\R^{2d})$ if it satisfies the  following weak formulation of (\ref{KRequation}):
\begin{multline}
 \label{weakKReqn}
\int_0^\infty\int_{\R^{2d}}\left[\partial_t \varphi + \frac{p}{m}\cdot\nabla_q\varphi-\bigl(\nabla_qV(q)+\gamma\nabla_pF(p)\bigr)\cdot \nabla_p\varphi+\gamma \kT\Delta_p\varphi\right]\rho\,dqdpdt
\\=-\int_{\R^{2d}}\varphi(0,q,p)\rho_0(q,p)\,dqdp,
\qquad\text{for all }\varphi\in  C_c^\infty(\R\times\R^{2d}).
\end{multline}
The main result of the paper is the following.

\begin{theorem}
\label{theo:maintheorem}
Let $\rho_0\in \P_2(\R^{2d})$ satisfy $\mathcal A(\rho_0) < \infty$. For any $h>0$ sufficiently small, let $\rho_k^h$ be the sequence of the solutions of any of the three Schemes 2a--c. For any $t\geq0$, define the piecewise-constant time interpolation
\begin{equation}
\rho^h(t,q,p)=\rho_k^h(q,p) \qquad \text{ for } (k-1)h<t\leq kh.\label{interpolation}
\end{equation}
Then for any $T>0$,
\begin{equation}
\rho^h\rightharpoonup \rho ~\text{ weakly in }~ L^1((0,T)\times\R^{2d})~\text{ as } h\to0,\label{weaklyconverence}
\end{equation}
where $\rho$ is the unique weak solution of the Kramers equation with initial value $\rho_0$. Moreover
\begin{equation}
\rho^h(t)\rightarrow\rho(t) ~ \text{ weakly in} ~ L^1(\R^{2d}) ~ \text{ as }~ h\to0 ~ \text{ for any }~ t>0, \label{pointwiseconvergence}
\end{equation}
and as $t\to0$,
\begin{equation}
\rho(t)\rightarrow\rho_0 ~\text{ in } ~ L^1(\R^{2d}). \label{intinialconvergence}
\end{equation}
\end{theorem}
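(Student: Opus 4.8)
The plan is to follow the by-now standard strategy for variational (JKO-type) approximation schemes, suitably adapted to the conservative-dissipative structure, treating the three schemes in parallel wherever possible. The argument has four stages: discrete Euler--Lagrange equations, a priori estimates, compactness, and passage to the limit, after which uniqueness of the weak solution upgrades subsequential convergence to the full statements \eqref{weaklyconverence}--\eqref{intinialconvergence}.

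\emph{Euler--Lagrange equations and a priori estimates.} First I would derive, in the spirit of Section~\ref{section:EulerLagrangEquation}, a discrete weak formulation for each scheme by perturbing the minimizer $\rho_k^h$ along the flow of a smooth compactly supported vector field and differentiating the functional at time zero; using the optimal plan supplied by Lemma~\ref{existoptimalplan}, this produces a discrete analogue of \eqref{weakKReqn} in which the diffusion and friction terms arise from the first variation of $\mathcal A$ and the convective terms from the first variation of the transport cost. The crucial point for the estimates is the choice of competitor. Because $\widetilde C_h$ and $\widehat C_h$ do not vanish on the diagonal, the naive competitor $\rho=\rho_{k-1}^h$ is useless; instead I would use the measure transported by the approximate symplectic (Verlet-type) map $T_h(q,p)=\bigl(q+\tfrac hm p-\tfrac{h^2}{2m}\nabla V(q),\,p-h\nabla V(q)\bigr)$, along which $\widehat C_h(q,p;T_h(q,p))=0$ by construction (and $\widetilde C_h$ is correspondingly $O(h^2)$ by \eqref{assumpt5}), while for Scheme~2c the explicit Hamiltonian step already plays this role via $\sigma_h$. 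Comparing against this competitor yields the dissipation inequality $\tfrac1{2h\gamma}W_h^\ast(\rho_{k-1}^h,\rho_k^h)+\mathcal A(\rho_k^h)\le \mathcal A\bigl((T_h)_\sharp\rho_{k-1}^h\bigr)$, and since $T_h$ is volume-preserving up to a factor $1+O(h^2)$ the right-hand side differs from $\mathcal A(\rho_{k-1}^h)$ by $O(h^2)$ per step. Summing over $k$ then gives the key bounds $\sum_k W_h^\ast(\rho_{k-1}^h,\rho_k^h)\le Ch$ and a uniform bound on $\mathcal A(\rho_k^h)$; a separate discrete Gronwall argument, using \eqref{assumpt6}, controls the second moment $M_2(\rho_k^h)$ uniformly on $[0,T]$, as in Section~\ref{section:priorestimate}.

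\emph{Compactness and passage to the limit.} The uniform bounds on $M_2$ and on the entropy part of $\mathcal A$ give tightness and equi-integrability, so by the Dunford--Pettis theorem $\{\rho^h(t)\}$ is weakly relatively compact in $L^1(\R^{2d})$ for each $t$; the bound $\sum_k W_h^\ast\le Ch$ furnishes a modulus of continuity in time, sufficient to extract a subsequence with $\rho^h\rightharpoonup\rho$ in $L^1((0,T)\times\R^{2d})$ and to make sense of the time slices. I would then (as in Section~\ref{section:Convergence}) sum the discrete Euler--Lagrange equations against $\varphi\in C_c^\infty(\R\times\R^{2d})$, perform an Abel summation-by-parts in time, and pass to the limit. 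Each term must be matched to \eqref{weakKReqn}: the entropy variation produces $\gamma\kT\Delta_p\varphi$, the variation of $\int\rho F$ produces the friction term $\gamma\nabla_pF\cdot\nabla_p\varphi$, and the cost structure (or the streaming map in Scheme~2c) produces the Hamiltonian transport $\tfrac pm\cdot\nabla_q\varphi-\nabla V\cdot\nabla_p\varphi$. The remainder terms---Taylor corrections from the nonlinearity of $V$, the Jacobian factor $1+O(h^2)$, and, for Scheme~2b, the replacement of $\nabla V(\xi(t))$ by $\nabla V(q)$---are all shown to vanish using \eqref{assumpt5} together with $\sum_k W_h^\ast\le Ch$ and the moment bounds. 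Finally, since the linear Kramers equation admits a unique weak solution under \eqref{assumpt1}--\eqref{assumpt5}, every subsequential limit coincides with $\rho$, so the whole family converges, giving \eqref{weaklyconverence} and \eqref{pointwiseconvergence}, while \eqref{intinialconvergence} follows from the same time-modulus estimate as $t\to0$.

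\emph{Main obstacle.} I expect the hardest part to be the convective terms, and in particular Scheme~2a, whose cost $\widetilde C_h$ has no closed form. The natural route is to compare $\widetilde C_h$ with the explicit $\widehat C_h$---which it equals when $V$ is linear---and to estimate the discrepancy through the bounds on $\nabla^2 V$ and $\nabla^3 V$ in \eqref{assumpt5}, showing that the minimizing path $\xi$ stays $O(h)$-close to the free trajectory so that its contribution is negligible as $h\to0$. A second, more structural difficulty is that all three costs couple the increments $q'-q$ and $p'-p$, so that extracting the two separate transport terms $\tfrac pm\cdot\nabla_q\varphi$ and $-\nabla V\cdot\nabla_p\varphi$ requires careful control of the optimal plan $\PSopt$: one must show, in an averaged sense, that $q'-q\approx\tfrac hm p$ and $p'-p\approx-h\nabla V(q)$ with errors controlled by $W_h^\ast$, which is exactly where the scaling $W_h^\ast=O(h^2)$ per step is essential.
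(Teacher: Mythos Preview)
Your four-stage plan matches the paper's architecture (Sections~\ref{section:EulerLagrangEquation}--\ref{section:Convergence}), and the competitor idea is right in spirit. Two differences are worth flagging.

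\emph{Competitor (minor).} For Scheme~2a the paper takes as competitor the push-forward under the \emph{exact} Hamiltonian flow $\mathbf s_h$ rather than a Verlet-type map; this gives $\widetilde W_h(\rho_{k-1}^h,(\mathbf s_h^{-1})_\sharp\rho_{k-1}^h)=0$ and $S((\mathbf s_h^{-1})_\sharp\rho_{k-1}^h)=S(\rho_{k-1}^h)$ exactly (Lemma~\ref{priorbound1:schemes2ab}), so no Jacobian bookkeeping is needed. Your approximate map would also work, but the extra $O(h^2)$ cost and entropy errors would have to be absorbed into the Gronwall argument of Lemma~\ref{priorbound2}.

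\emph{Convective terms (substantive).} The paper does \emph{not} extract the Hamiltonian drift by showing that the optimal plan satisfies $q'-q\approx\tfrac hm p$ and $p'-p\approx-h\nabla V(q)$, and your proposal to do so does not close on its own. The dissipative terms can only come from the Euler--Lagrange equation~\eqref{EuLageqn}, and with a generic perturbation $(\phi,\eta)$ the cost-variation term retains the cross piece from $12\bigl|\tfrac mh(q'-q)-\tfrac{p+p'}{2}\bigr|^2$; for instance with $(\phi,\eta)=(0,\nabla_p\varphi)$ it contributes, after summation in $k$, a term of order $\sum_k h^{-1}\sqrt{W_h^*}\cdot h=O(1)$ that does not vanish. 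The paper's device is purely algebraic: one chooses
\[
\begin{pmatrix}\phi\\\eta\end{pmatrix}
=\begin{pmatrix}-\tfrac{\gamma h^2}{6m^2}I & \tfrac{\gamma h}{2m}I\\[2pt] -\tfrac{\gamma h}{2m}I & \gamma I\end{pmatrix}\nabla\varphi,
\]
which effectively inverts the quadratic form $\overline C_h$, so that $\nabla_{q'}C_h^*\cdot\phi+\nabla_{p'}C_h^*\cdot\eta$ equals, up to controllable remainders, $2\gamma\bigl[(q'-q)\cdot\nabla_{q'}\varphi+(p'-p)\cdot\nabla_{p'}\varphi-\tfrac hm p'\cdot\nabla_{q'}\varphi+h\nabla V\cdot\nabla_{p'}\varphi\bigr]$ (Lemmas~\ref{lemma:realcostEL}--\ref{firstvariationLemma}). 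The first two brackets then cancel against the discrete time-difference $\int(\rho_k^h-\rho_{k-1}^h)\varphi$, leaving precisely the Hamiltonian transport together with the $\mathcal A$-variation. For Scheme~2a the genuine obstacle is therefore not the averaged behaviour of $\PSopt$ but computing $\nabla_{q'}\widetilde C_h$, $\nabla_{p'}\widetilde C_h$ without an explicit minimizer; the paper does this in Lemma~\ref{lemma:costproperties} by comparing the optimal curve $\txi$ of $\widetilde C_h$ with the free cubic $\oxi$ of $\overline C_h$ and bounding $\txi-\oxi$ through the fourth-order ODE~\eqref{eq:ELeqnOptimalCurve}.
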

\begin{proof}[Outline of the proof] The proof follows the procedure of \cite{JKO98} (see also \cite{Hua00,Hua11}) and is divided into three main steps, which are carried out in Sections~\ref{section:EulerLagrangEquation}, \ref{section:priorestimate}, and~\ref{section:Convergence}: establish the Euler-Lagrange equation for the minimizers, then estimate the second moments and entropy functionals, and finally pass to the limit $h\rightarrow 0$. We start in Section~\ref{sec:costproperties} with some properties of the cost functions.
\end{proof}

\section{Properties of the three cost functions}
\label{sec:costproperties}

Here we derive and summarize a number of properties of the three cost functions.
%Below $C_h^*$ refers to any one of the three costs $\widetilde C_h$, $\widehat C_h$, or $C_h$.
Define the  quadratic form
\[
N(q,p) := |\gamma q|^2 + |p|^2,
\]
so that $M_2(\rho) = \int_{\R^{2d}} N(q,p)\, \rho(q,p)\, dqdp$.

\begin{lemma}
\label{lemma:costproperties}
\begin{enumerate}
%\item There exists $C>0$ such that
%\begin{equation}
%\label{est:overlinestar}
%\overline C_h(q,p;q',p') \leq C\Big[C_h^*(q,p;q',p') + h^2\big[N(q,p) + N(q',p')\big]\Big].
%\end{equation}
%\marginpar{Do we use this estimate?}
\item \label{lemma:costproperties:ineqs:qpC}
Let~$C_h^*$ be either $\widetilde C_h$ or $\widehat C_h$. There exists $C>0$ such that %\marginpar{At the moment only proved for $\widetilde C_h$}
\begin{subequations}
\label{ineqs:qpC}
\begin{align}
%|q-q'|^2  &\leq Ch^2 \big[\widetilde C_h(q,p;q',p') + N(q,p) + N(q',p') \big],\label{ineq:qC}\\
%|p-p'|^2 &\leq C\big[\widetilde C_h(q,p;q',p') + h^2 N(q,p) + h^2N(q',p')\big].\label{ineq:pC}
|q-q'|^2+|p-p'|^2&\leq CC_h(q,p;q',p'),\label{ineq:qpC2c}\\
|q-q'|^2  &\leq Ch^2 \big[C_h^*(q,p;q',p') + N(q,p) + N(q',p')\big],\label{ineq:qC}\\
|p-p'|^2 &\leq C\big[C_h^*(q,p;q',p') + h^2 N(q,p) + h^2N(q',p')\big].\label{ineq:pC}
%\red{|p-p'|^2}& \red{\leq \widehat C_h(q,p;q',p'), C_h(q,p;q,p')\text{ This is not true now}}.
%\label{ineq:pC2}
\end{align}
\end{subequations}
\item \label{lemma:costs:derivsTilde}
For the cost function $\widetilde C_h$ of Scheme~2a we have
\begin{subequations}
\label{derivs:tildeCh}
\begin{align}
\nabla_{q'}\widetilde{C}_h(q,p;q',p')&=\frac{24m}{ h}\left(\frac{m}{h}(q'-q)-\frac{p'+p}{2}\right) - 2h\nabla^2 V(q')\cdot p' + \sigma_h(q,p;q',p'),\label{derivs:tildeCh:q}
\\\nabla_{p'}\widetilde{C}_h(q,p;q',p')&=2(p'-p)-12\left(\frac{m}{h}(q'-q)-\frac{p'+p}{2}\right)+2h\nabla V(q') + \tau_h(q,p;q',p'),\label{derivs:tildeCh:p}
\end{align}
\end{subequations}
where there exists $C>0$ such that
\begin{multline}
|\sigma_h(q,p;q',p) |, \frac1h |\tau_h(q,p;q',p')|
\leq C h\Big\{\widetilde C_h(q,p;q',p') + N(q,p) + N(q',p')+1 \Big\}.
\label{est:sigmatau}
\end{multline}
\item \label{lemma:costs:derivsHat}
For the cost function $\widehat C_h$ of Scheme~2b we have
\begin{subequations}
\label{derivs:hatCh}
\begin{align}
\nabla_{q'}\widehat{C}_h(q,p;q',p')&=\frac{24m}{ h}\left(\frac{m}{h}(q'-q)-\frac{p'+p}{2}\right),\label{derivs:hatCh:q}
\\\nabla_{p'}\widehat{C}_h(q,p;q',p')&=2(p'-p)-12\left(\frac{m}{h}(q'-q)-\frac{p'+p}{2}\right)+2h\nabla V(q).\label{derivs:hatCh:p}
\end{align}
\end{subequations}
\item For the cost function $ C_h$ of Scheme~2c we have
\label{lemma:costs:derivsCh}
\begin{subequations}
\label{derivs:Ch}
\begin{align}
\nabla_{q'}C_h(q,p;q',p')&=\frac{24m}{h}\left(\frac{m}{h}(q'-q)-\frac{p'-p}{2}\right)+4m(\nabla V(q')-\nabla V(q))+r(q,q'),
\\\nabla_{p'}C_h(q,p;q',p')&=2(p'-p)-12\left(\frac{m}{h}(q'-q)-\frac{p'-p}{2}\right),
\end{align}
\end{subequations}
where
\begin{equation}
\label{est:r}
|r(q,q')|
\leq Ch^2 \big[C_h(q,p;q',p') + N(q,p) + N(q',p')\big].
\end{equation}
\end{enumerate}
\end{lemma}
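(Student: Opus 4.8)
My plan is to split the three cost functions according to how explicit they are. The costs $\widehat C_h$ and $C_h$ of Schemes 2b and 2c are given by closed formulas in~\eqref{def:widehatCh} and~\eqref{costfunct}, so I expect the derivative identities of parts~\ref{lemma:costs:derivsHat} and~\ref{lemma:costs:derivsCh}, together with the inequalities~\eqref{ineq:qpC2c} and the $\widehat C_h$-cases of~\eqref{ineq:qC}--\eqref{ineq:pC}, to follow by direct differentiation and elementary algebra, with the assumptions~\eqref{assumpt2}--\eqref{assumpt5} on $V$ supplying the quantitative bounds. The cost $\widetilde C_h$ of Scheme 2a is defined through a minimization in~\eqref{def:widetildeCh} and has no explicit form, so this is where the real work sits: both part~\ref{lemma:costs:derivsTilde} and the $\widetilde C_h$-cases of part~\ref{lemma:costproperties:ineqs:qpC}.

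For part~\ref{lemma:costproperties:ineqs:qpC} I would argue as follows. Inequality~\eqref{ineq:qpC2c} should come straight from the monotonicity assumption~\eqref{assumpt2}, which makes the last term of $C_h$ coercive, $2m(q'-q)\cdot(\nabla V(q')-\nabla V(q))\ge (2m/C)|q'-q|^2\ge 0$; dropping the nonnegative middle square then gives $C_h\ge |p'-p|^2+(2m/C)|q'-q|^2$, which is~\eqref{ineq:qpC2c}. For $\widehat C_h$ the inequalities~\eqref{ineq:qC}--\eqref{ineq:pC} are read off~\eqref{def:widehatCh}: the second square controls $(m/h)|q'-q|$ up to $|p'+p|$, giving~\eqref{ineq:qC} after $|p'+p|^2\le 2(N(q,p)+N(q',p'))$, and the first square controls $|p'-p|$ up to $h|\nabla V(q)|\le Ch|q|$ by~\eqref{assumpt6}, giving~\eqref{ineq:pC}. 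For $\widetilde C_h$ I would fix a near-minimizing path $\xi$ and write $p'-p=\int_0^h m\ddot\xi\,dt=\int_0^h[m\ddot\xi+\nabla V(\xi)]\,dt-\int_0^h\nabla V(\xi)\,dt$; Cauchy--Schwarz bounds the first integral by $\widetilde C_h^{1/2}$, while the second is at most $C\int_0^h|\xi|\,dt$. The remaining ingredient is a Gronwall estimate for $\sup_{[0,h]}|\xi|$ in terms of $|q|$, $|p|$ and $\widetilde C_h^{1/2}$, valid for small $h$; inserting it yields~\eqref{ineq:pC}, and the same treatment of $q'-q=\int_0^h\dot\xi\,dt$, which carries an extra factor $h$, yields~\eqref{ineq:qC}.

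Part~\ref{lemma:costs:derivsHat} is immediate: differentiating~\eqref{def:widehatCh} in $q'$ and $p'$, and noting that $\nabla V(q)$ is independent of the primed variables, reproduces~\eqref{derivs:hatCh:q}--\eqref{derivs:hatCh:p}. For part~\ref{lemma:costs:derivsCh} I would differentiate the explicit $C_h$ of~\eqref{costfunct}; the only subtlety is the last term, whose $q'$-gradient is $2m(\nabla V(q')-\nabla V(q))+2m\nabla^2 V(q')(q'-q)$. The claimed coefficient $4m$ then arises from the Taylor expansion $\nabla^2 V(q')(q'-q)=(\nabla V(q')-\nabla V(q))+r_0$ with $|r_0|\le C|q'-q|^2$ by~\eqref{assumpt5}; setting $r:=2m\,r_0$ and using $|q'-q|^2\le Ch^2[C_h+N(q,p)+N(q',p')]$ (proved exactly as~\eqref{ineq:qC}) gives~\eqref{est:r}. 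The $p'$-gradient has no primed $V$-dependence and is exact.

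The main obstacle is part~\ref{lemma:costs:derivsTilde}, the differentiation of the value function $\widetilde C_h$. Here I would exploit the optimal-control structure of the minimization~\eqref{def:widetildeCh}: writing the Euler--Lagrange (adjoint/costate) system for the Lagrangian $|m\ddot\xi+\nabla V(\xi)|^2$, the gradients $\nabla_{q'}\widetilde C_h$ and $\nabla_{p'}\widetilde C_h$ equal, up to the prefactor $h$ and a factor $1/m$ in the momentum variable, the costate variables of the minimizer evaluated at the terminal time $t=h$. I would then compute these by perturbing around the explicit $\widehat C_h$-minimizer (the cubic path obtained when $\nabla V$ is frozen at $\nabla V(q)$), expanding $\nabla V(\xi(t))$ along the short path and collecting terms by powers of $h$. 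The leading contributions should reproduce the $\widehat C_h$-derivatives~\eqref{derivs:hatCh}, corrected by the $-2h\nabla^2 V(q')\cdot p'$ and $2h\nabla V(q')$ terms coming from the variation of $\nabla V$ between $q$ and $q'$, while the higher-order remainders are precisely $\sigma_h$ and $\tau_h$; the bound~\eqref{est:sigmatau} should then follow from~\eqref{assumpt3}, \eqref{assumpt5} and the path estimates of part~\ref{lemma:costproperties:ineqs:qpC}. I expect the hard and most technical part to be this perturbative control of the non-explicit minimizer and its adjoint, in particular keeping careful track of which corrections are genuinely $O(h)$ and which are $O(h^2)$.
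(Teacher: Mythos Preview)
Your plan is correct and matches the paper's argument quite closely. Two remarks that may save you time when you flesh it out. First, the ``$\widehat C_h$-minimizer'' you propose to perturb around in part~\ref{lemma:costs:derivsTilde} is in fact the same cubic as the $\overline C_h$-minimizer $\overline\xi$: freezing $\nabla V$ at a constant leaves the Euler--Lagrange equation $m^2\ddddot\xi=0$ and the boundary data unchanged, so the paper's decomposition $\widetilde\xi=\overline\xi+u$ with $u=\dot u=0$ at both endpoints is exactly your perturbation; the derivatives of $\widetilde C_h$ then appear as the transversality (boundary) terms of the first variation, and the remainders $\sigma_h=-2hm^2\dddot u(h)$, $\tau_h=2hm\ddot u(h)$ are controlled via the fourth-order ODE for $u$ together with the Poincar\'e-type inequalities $\|\ddot u\|_\infty\le h\|\dddot u\|_\infty\le Ch\|\ddddot u\|_1$. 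Second, for the $\widetilde C_h$-case of part~\ref{lemma:costproperties:ineqs:qpC} the paper packages your Gronwall step as the single comparison inequality $\overline C_h + h\|\ddot{\widetilde\xi}\,\|_2^2 \le C\{\widetilde C_h + h^2 N\}$, obtained by inserting $\widetilde\xi$ into $\overline C_h$ and $\overline\xi$ into $\widetilde C_h$ and using~\eqref{assumpt6}; once you have this, \eqref{ineq:qC}--\eqref{ineq:pC} for $\widetilde C_h$ follow directly from the explicit $\overline C_h$ formula, which is slightly cleaner than carrying the integral representation of $p'-p$ and $q'-q$ through by hand.
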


\begin{proof}
For the length of this proof we fix $q,p,q',p'$, and $h$, and we abbreviate
\[
\overline C_h := \overline C_h(q,p;q',p'), \quad
\widetilde C_h := \widetilde C_h(q,p; q',p'),
\quad\text{and}\quad
N := N(q,p) + N(q',p') = |\gamma q|^2 + |p|^2 + |\gamma q'|^2 + |p'|^2.
\]
Let $\oxi(t)$ and $\txi(t)$, respectively, be the optimal curves in the definition of $\overline{C}_h$ in~\eqref{def:Chbar} and of $\widetilde{C}_h$ in~\eqref{def:widehatCh}. We will need a number of properties of these two curves. All the statements below are of the following type: there exists $C>0$ and $0<h_0<1$ such that the property holds for all $h<h_0$. Here $C$ is always independent of $q,p,q',p'$, and $h$. The norm $\|\cdot\|_p$ is the $L^p$-norm on the interval~$(0,h)$.

The curve $\oxi$ satisfies $\ddddot\oxi=0$, and hence it is a cubic polynomial
\begin{equation}\label{eq:simpleminimizer}
\overline{\xi}(t)=q_0+at+bt^2+ct^3,
\end{equation}
where the coefficients can be calculated from the boundary conditions:
\begin{equation*}
a=\frac{p}{m},\quad
b=\frac{3}{h^2}\left(q'-q-\frac{ph}{m}\right)-\frac{p'-p}{mh},
\quad c=\frac{p'+p}{mh^2}-\frac{2}{h^3}(q'-q).
\end{equation*}
Explicit calculations give
\begin{align}
\|\oxi\|_2^2 &\leq h \|\oxi\|_\infty^2 \leq ChN,\label{est:oxil2}\\
\|\ddot\oxi\|_2^2 &\leq h \|\ddot\oxi\|_\infty^2 \leq C\Big\{h^{-3} |q-q'|^2 + h^{-1}|p-p'|^2\Big\}, \label{est:oxippl2}\\
\|\ddot\oxi\|_1 &\leq h \|\ddot\oxi\|_\infty \leq C\Big\{h^{-1} |q-q'| + |p-p'|\Big\}.\label{est:oxippl1}
\end{align}

The curve $\txi(t)$ satisfies the equation
\begin{align}
\label{eq:ELeqnOptimalCurve}
&\mathcal N(\txi)(t) := m^2 \ddddot\txi(t)+2m\nabla^2V(\txi)\cdot {\ddot\txi}(t)+m\nabla^3V(\txi)\cdot {\dot\txi}\cdot {\dot\txi}(t)+\nabla^2V(\txi)\cdot \nabla V(\txi)(t)=0,\\
&(\txi,m{\dot\txi})(0)=(q,p),~(\txi,m{\dot\txi})(h)=(q',p'),\notag
\end{align}
where  $\nabla^3V$ is the third-order tensor of third derivatives of $V$. This is a relatively benign equation, but non-trivially nonlinear.

We will need the following four intermediate estimates:
\begin{align}
\|\txi\|_2^2 &\leq ChN,\label{est:tildexil2}\\
\overline C_h +  h\|\ddot\txi\,\|_2^2 &\leq C\big\{ \widetilde C_h + h^2 N\big\},\label{est:oChtCh}\\
%h^{-2}|q-q'|^2 + |p-p'|^2 &\leq C (\widetilde C_h + N),\label{est:}\\
\|\dot\txi\,\|_2^2 &\leq Ch\big\{ \widetilde C_h +  N\big\},\label{est:txip}\\
\|\ddddot u\.\|_1 & \leq C\big\{\widetilde C_h + N + 1\big\}.\label{est:upppp}
\end{align}
We first prove~\eqref{est:tildexil2}. Since $\txi$ is optimal in $\widetilde C_h$,
\begin{eqnarray}
m \|\ddot\txi\|_2 &\leq& \|m\ddot\txi + \nabla V(\txi)\|_2 + \|\nabla V(\txi)\|_2 \notag\\
&\stackrel{\eqref{def:widetildeCh}}\leq& \|m\ddot\oxi + \nabla V(\oxi)\|_2 + \|\nabla V(\txi)\|_2 \notag\\
&\leq& m \|\ddot\oxi\|_2 + \|\nabla V(\oxi)\|_2 + \|\nabla V(\txi)\|_2 \notag\\
&\stackrel{\eqref{assumpt6}}	\leq& m \|\ddot\oxi\|_2 + C\big(\|\oxi\|_2 + \|\txi\|_2\big)\notag\\
&\leq& m \|\ddot\oxi\|_2 + C\big(\|\oxi\|_2 + h^{1/2} \|\txi\|_\infty\big).
\label{est:txippl2}
\end{eqnarray}
Therefore
\begin{align*}
\|\txi\|_\infty &\leq  |\txi(0)| + h|\dot\txi(0)| +  h^{3/2} \|\ddot\txi\|_2\\
&\leq  |q| + \frac {h}m |p| +  Ch^{3/2}\Big\{\|\ddot\oxi\|_2  + \|\oxi\|_2 + h^{1/2} \|\txi\|_\infty\Big\}.
\end{align*}
If $h_0$ is small enough, then $Ch^2 < 1/2$, so that
\begin{align*}
\|\txi\|_\infty &\stackrel{\eqref{est:oxil2},\eqref{est:oxippl2}}\leq 2|q| + \frac {2h}m |p| + C\Big\{|q-q'| + h |p-p'|+  h^2\sqrt{N}\Big\}.
\end{align*}
Therefore
\[
\|\txi\|_2^2 \leq h\|\txi\|_\infty^2 \leq ChN,
\]
which is~\eqref{est:tildexil2}.

Similar to~\eqref{est:txippl2} it also follows, since $\txi$ is admissible for $\overline C_h$, that
\begin{align*}
\overline C_h &= m^2 h \|\ddot\oxi\|_2^2\leq m^2 h \|\ddot\txi\|_2^2  \leq 2h\|m\ddot\txi + \nabla V(\txi)\|_2^2 + 2h\|\nabla V(\txi)\|_2^2\\
&\stackrel{\eqref{def:widetildeCh},\eqref{assumpt6}}\leq 2\widetilde C_h + Ch\|\txi\|_2^2
  \stackrel{\eqref{est:tildexil2}}\leq 2\widetilde C_h + Ch^2 N,
\end{align*}
which implies~\eqref{est:oChtCh}.

We now can prove part~\ref{lemma:costproperties:ineqs:qpC} of the Lemma. \eqref{ineq:qpC2c} is a direct consequence of~\eqref{costfunct} and~\eqref{assumpt2}. The estimate for $p$ follows from~\eqref{def:widehatCh} and~\eqref{assumpt6} for $\widehat{C}_h$, and from~\eqref{def:Chbar} and~\eqref{est:oChtCh} for $\widetilde{C}_h$:
\begin{align*}
\abs{p'-p}^2&\leq C\Big[\abs{p'-p+h\nabla V(q)}^2+h^2\abs{\nabla V(q)}^2\Big]\leq C\Big[\widehat{C}_h(q,p;q',p')+h^2N\Big],\\
\abs{p'-p}^2&\leq \overline{C}_h\leq C(\widetilde{C}_h+h^2 N).
\end{align*}
Similarly,
\begin{align}
\abs{q'-q}^2&=\frac{h^2}{m^2}\left|\frac{m}{h}(q'-q)-\frac{p+p'}{2}+\frac{p'+p}{2}\right|^2\notag
\\&\leq \frac{3h^2}{m^2}\left(\left|\frac{m}{h}(q'-q)-\frac{p+p'}{2}\right|^2+\frac{|p|^2}{4}+\frac{|p'|^2}{4}\right)\notag
\\&\leq Ch^2(\overline{C}_h+N)\leq Ch^2(\widetilde{C}_h + N ),
\label{est:qChproof}
\end{align}
and also
\[
\abs{q'-q}^2\leq Ch^2(\widehat{C}_h + N).
\]

Using the Poincar\'e inequality $\|v-\dashint v\|_2 \leq Ch \|v'\|_2$, the estimate~\eqref{est:txip} then follows by
\[
\|\dot\txi\|^2_2 \leq 2\|{\textstyle\dashint \dot\txi}\|^2_2 + Ch^2\|\ddot\txi\|_2^2
\stackrel{\eqref{est:oChtCh}}\leq \frac2{h} |q-q'|^2 +  Ch\big\{ \widetilde C_h + h^2 N\big\}
\stackrel{\eqref{ineq:qC}}\leq Ch\big\{ \widetilde C_h  + N\big\}.
\]

To prove the final of the four intermediate estimates, \eqref{est:upppp}, we define $u= \txi-\oxi$; remark that
\begin{equation}
\label{eq:u}
m^2 \ddddot u = -2m\nabla^2V(\txi)\cdot \ddot\txi-m\nabla^3V(\txi)\cdot \dot\txi\cdot \dot\txi-\nabla^2V(\txi)\cdot \nabla V(\txi).
\end{equation}
Note that $u=\dot u=0$ at $t=0,h$, so that we have $\|u\|_1 \leq Ch^4\|\ddddot u\.\|_1$ and $\|\ddot u\|_1\leq C h^2 \|\ddddot u\.\|_1$. We then calculate
\begin{eqnarray*}
\|\ddddot u\.\|_1 &\stackrel{\eqref{eq:u},\eqref{assumpt}}\leq& C\Big\{\|\ddot\txi\|_1 + \|\dot\txi\|_2^2 + \|\txi\|_1 + h\Big\}\\
&\leq& C\Big\{\|\ddot\oxi\|_1 + \|\dot\txi\|_2^2 + \|\oxi\|_1 +  \|\ddot u\|_1 + \|u\|_1\Big\}\\
&\leq& C\Big\{\|\ddot\oxi\|_1 + \|\dot\txi\|_2^2 + \|\oxi\|_1 +  h^2\|\ddddot u\.\|_1 + h^4\|\ddddot u\.\|_1\Big\}.
\end{eqnarray*}
Again, taking $h_0$ sufficiently small, we have $C(h^2+h^4)<1/2$, and therefore
\begin{eqnarray*}
\|\ddddot u\.\|_1 &\leq& C\Big\{\|\ddot\oxi\|_1 + \|\dot\txi\|_2^2 + \|\oxi\|_1 \Big\}\\
&\stackrel{\eqref{est:oxil2},\eqref{est:oxippl1},\eqref{est:txip}}\leq& C\Big\{\frac{|q-q'|}h + |p-p'| +  h\widetilde C_h + h N + h\sqrt N \Big\}\\
&\stackrel{\eqref{ineq:qC}}\leq & C\Big\{\sqrt{\widetilde C_h + N} +  h\widetilde C_h  + N  + 1\Big\}\\
&\leq & C\Big\{\widetilde C_h  + N  + 1\Big\}.
\end{eqnarray*}

We now continue with parts~\ref{lemma:costs:derivsTilde}, \ref{lemma:costs:derivsHat}, and~\ref{lemma:costs:derivsCh}. The derivatives of $\widehat{C}_h$ can be calculated directly using the explicit expression~\eqref{def:widehatCh}.
The  derivatives of $\widetilde{C}_h$ can be calculated as follows. Let $\eta\in C^2([0,h];\R^{2d})$ satisfy $\eta(0) = 0$. Then
\begin{align*}
\lim_{\e\to0}\; &4\gamma \kT h\,\widetilde I(\txi+\e\eta) =
2h\int_0^h \big{(}m{\ddot\txi} + \nabla V(\txi)\big{)}\cdot \big{(}m{\ddot\eta} + \nabla^2 V(\txi)\cdot \eta\big{)}(t)\, dt\\
&= 2h\int_0^h \mathcal N(\txi)\cdot\eta(t)\, dt + 2h\Big[m\dot\eta\big{(}m{\ddot\txi} + \nabla V(\txi)\big{)} - m\eta\big{(}m{\dddot\txi} + \nabla^2 V(\txi)\cdot {\dot\txi}\big{)}\Big](h).
\end{align*}
Note that $\mathcal N(\txi)\equiv0$ by the stationarity~\eqref{eq:ELeqnOptimalCurve} of $\txi$. This expression is equal to
\[
\nabla_{q'}\widetilde{C}_h(q,p;q',p')\cdot \eta(h) + \nabla_{p'}\widetilde{C}_h(q,p;q',p')\cdot m\dot \eta(h),
\]
which allows us to identify the two derivatives in terms of $\txi$. Setting $u=\txi-\overline{\xi}$, we rewrite these in terms of $u$:
\begin{eqnarray*}
\nabla_{q'}\widetilde{C}_h(q,p;q',p')
&=&-2hm^2\dddot\txi(h)-2hm\nabla^2V(\txi(h))\cdot{\dot\txi}(h)
\\&=&-2hm^2\overline{\dddot\xi}(h)-2hm\nabla^2V(\txi(h))\cdot{\dot\txi}(h)-2hm^2\big{(}\dddot\txi(h)-\dddot\oxi(h)\big{)}
\\&\stackrel{\eqref{eq:simpleminimizer}}=&\frac{24m}{h}\left(\frac{m}{h}(q'-q)-\frac{p'+p}{2}\right)-2h\nabla^2 V(q')\cdot p'-2hm^2\dddot u(h),\\
%&\stackrel{\eqref{derivs:hatCh:q}}=& \nabla_{q'}\widehat{C}_h(q,p;q',p') -2h\nabla^2 V(q')\cdot p'-2hm^2\dddot u(h),\\
%
\nabla_{p'}\widetilde{C}_h(q,p;q',p')
&=&2hm{\ddot\txi}(h)+2h\nabla V(\txi(h))
\\&=&2hm{\ddot\oxi}(h)+2h\nabla V(\txi(h))+2hm\big{(}{\ddot\txi}(h)-{\ddot\oxi}(h)\big{)}
\\&\stackrel{\eqref{eq:simpleminimizer}}=&2(p'-p)-12\left(\frac{m}{h}(q'-q)-\frac{p'+p}{2}\right)+2h\nabla V(q')+2hm\ddot u(h).
%&\stackrel{\eqref{derivs:hatCh:p}}=& \nabla_{p'}\widehat C_h(q,p;q',p') + 2h(\nabla V(q')-\nabla V(q)) +2mh\ddot u(h).
\end{eqnarray*}
Therefore~\eqref{derivs:tildeCh} holds with
\[
\sigma_h = -2hm^2 \dddot u(h) \qquad\text{and}\qquad \tau_h = 2hm\ddot u(h).
\]
The estimates~\eqref{est:sigmatau}  then follow from~\eqref{est:upppp} and the inequalities
\[
\|\ddot u\|_\infty \leq h\|\dddot u\.\|_\infty \leq Ch\|\ddddot u\.\|_1,
\]
which hold since $u=\dot u=0$ at $t=0,h$.

The derivatives of $C_h$ are given by~\eqref{derivs:Ch}, where
\[
r(q,q') := 2m\Big[\nabla^2 V(q')\cdot (q'-q) - \nabla V(q') + \nabla V(q)\Big].
\]
The estimate~\eqref{est:r} on $r$ follows from~\eqref{assumpt5}, \eqref{est:qChproof}, and the fact that by~\eqref{assumpt2}, $\overline C_h\leq C_h$.
\end{proof}

\section{The Euler-Lagrange equation for the minimization problem}
\label{section:EulerLagrangEquation}
Let $C^*_h$ be one of $\widetilde C_h$, $\widehat{C}_h$, or $C_h$, defined in~\eqref{def:widetildeCh}, \eqref{def:widehatCh}, and~\eqref{costfunct}, with corresponding optimal-transport cost functional $W^*_h$.
Let $\overline\rho\in\P_2(\R^{2d})$ be given and let $\rho$ be the unique solution of the minimization problem
\begin{equation*}
\min_{\mu\in\P_2(\R^{2d})}\frac{1}{2\gamma h}W^*_h(\overline\rho,\mu)+\mathcal A(\mu).
\end{equation*}

We now establish the Euler-Lagrange equation for $\rho$. Following the now well-established route (see e.g.~\cite{JKO98,Hua00}), we first define a perturbation of $\rho$ by a push-forward under an appropriate flow.
Let $\xi,\eta\in C_0^\infty(\R^{2d},\R^d)$. We define the flows
$\Phi,\Psi\colon[0,\infty)\times\R^{2d}\rightarrow\R^d$ such that
\begin{eqnarray*}
&&\frac{\partial\Psi_s}{\partial s}=\phi(\Psi_s,\Phi_s),~ \frac{\partial\Phi_s}{\partial s}=\eta(\Psi_s,\Phi_s), \\
&& \Psi_0(q,p)=q,~\Phi_0(q,p)=p.
\end{eqnarray*}
Let $\rho_s(q,p)$ be the push forward of $\rho(q,p)$ under the flow $(\Psi_s,\Phi_s)$, i.e., for any $\varphi\in C_0^\infty(\R^{2d},\R)$ we have
\begin{equation}
\int_{\R^{2d}}\varphi(q,p)\rho_s(q,p)dqdp=\int_{\R^{2d}}\varphi(\Psi_s(q,p),\Phi_s(q,p))\rho(q,p)dqdp. \label{pushforward}
\end{equation}
Obviously $\rho_0(q,p)=\rho(q,p)$, and an explicit calculation gives
\begin{equation}
\partial_s\rho_s\big|_{s=0} =-\text{div}_q\rho\phi-\text{div}_p\rho\eta \qquad \text{in the sense of distributions}.
\end{equation}
By following the calculations in e.g.~\cite{Hua00} we then compute the stationarity condition on $\rho$,
\begin{align}
0&=\frac{1}{2\gamma h}\int_{\R^{4d}}\left[\nabla_{q'}C^*_h(q,p;q',p')\cdot\phi(q',p')+\nabla_{p'}C^*_h(q,p;q',p')\cdot\eta(q',p')\right]\PSopt(dqdpdq'dp')\nonumber
\\&\qquad+\int_{\R^{2d}}\rho(q,p)\nabla_pF(p)\cdot\eta(q,p)dqdp-\kT\int_{\R^{2d}}\rho(q,p)\left[\text{div}_q\phi(q,p)+\text{div}_p\eta(q,p)\right]dqdp,
\label{EuLageqn}
\end{align}
where $\PSopt$ is optimal in $W_h^*(\overline\rho,\rho)$.
For any $\varphi\in C_0^\infty(\R^{2d},\R)$, we choose
\begin{align*}
\phi(q',p')&=-\frac{\gamma h^2}{6m^2}\nabla_{q'}\varphi(q',p')+\frac{\gamma h}{2m}\nabla_{p'}\varphi(q',p'),\nonumber
\\\eta(q',p')&=-\frac{\gamma h}{2m}\nabla_{q'}\varphi(q',p')+\gamma \nabla_{p'}\varphi(q',p').
\end{align*}
i.e.,
\begin{equation}\label{xiphi}
\begin{pmatrix}
\phi\\
\eta
\end{pmatrix}=
\begin{pmatrix}
-\frac{\gamma h^2}{6m^2}I&\frac{\gamma h}{2m}I\\
-\frac{\gamma h}{2m}I&\gamma I
\end{pmatrix}\nabla\varphi(q',p').
\end{equation}
Now the specific form of the cost functional $C^*_h(q,p;q',p')$ comes into play. We calculate the gradient expression in \eqref{EuLageqn} for each scheme in the next subsections.

\begin{remark}
The structure of the choice~\eqref{xiphi} can be understood in terms of the conservative-dissipative nature of the Kramers equation.
The matrix in front of $\nabla \varphi(q',p')$ in \eqref{xiphi} is of the form
\begin{equation*}
\begin{pmatrix}
-\frac{\gamma h^2}{6m^2}I&\frac{\gamma h}{2m}I\\
-\frac{\gamma h}{2m}I&\gamma I
\end{pmatrix}=
\underbrace{\begin{pmatrix}
-\frac{\gamma h^2}{6m^2}I&0\\
0&\gamma I
\end{pmatrix}}_{A}-\underbrace{\frac{\gamma h}{2m}\begin{pmatrix}
0&I\\
-I&0
\end{pmatrix}}_{B}.
\end{equation*}
Note that $A$ is symmetric and $B$ is antisymmetric: this mirrors the conservative-dissipative structure of the Kramers equation.

The top-left block in $A$, which would correspond to diffusion in the spatial variable $q$, is of order $O(h^2)$, and therefore vanishes when $h\to0$. The other block, which corresponds to  diffusion in the momentum variable $p$, is of order $O(1)$ and remains. This explains how in the limit $h\rightarrow 0$ only diffusion in the momentum variable remains.
\end{remark}

\subsection{Schemes 2a and 2b}

\begin{lemma}
\label{lemma:realcostEL}
Let $h>0$ and let $\{\rho^h_k\}$ be the sequence of the minimizers  either for problem~\eqref{scheme:2a} in Scheme~2a or for problem~\eqref{scheme:2b} in Scheme~2b. Let $W_h^*$ be $\widetilde W_h$ for Scheme~2a and $\widehat W_h$ for Scheme~2b, and let $P_k^{h*}$ be optimal in $W_h^*(\rho_{k-1}^h,\rho_k^h)$. Then, for all $\varphi\in C_c^\infty(\R^{2d})$, there holds
\begin{align}
\label{eq:realcostEL}
0&=\frac{1}{h}\int_{\R^{4d}}\left[(q'-q)\cdot\nabla_{q'}\varphi(q',p')+(p'-p)\cdot\nabla_{p'}\varphi(q',p')\right]{P}_k^{h*}(dqdpdq'dp')\nonumber
\\&\quad -\frac{1}{m}\int_{\R^{2d}}p'\cdot\nabla_{q'}\varphi(q',p')\rho_k^h(q',p')dq'dp'+\int_{\R^{2d}}\nabla V(q')\cdot \nabla_{p'}\varphi(q',p')\rho_k^h(q',p')dq'dp'\nonumber
\\&\quad +\gamma\int_{\R^{2d}}\left[\nabla F(p')\cdot\nabla_{p'}\varphi(q',p')-\kT\Delta_{p'}\varphi(q',p')\right]\rho_k^h(q',p')dq'dp'+\omega_k^h,
\end{align}
where
\[
|\omega_k^h|\leq Ch\Big[  W_h^*(\rho_{k-1}^h,\rho_k^h) + M_2(\rho_{k-1}^h) + M_2(\rho_k^h) + 1\Big].
\]
\end{lemma}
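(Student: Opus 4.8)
The plan is to substitute the specific vector fields from~\eqref{xiphi} into the general stationarity condition~\eqref{EuLageqn} and to exploit the explicit derivative formulas from Lemma~\ref{lemma:costproperties}, parts~\ref{lemma:costs:derivsTilde}--\ref{lemma:costs:derivsHat}. Writing $\Theta := \frac mh(q'-q)-\frac{p'+p}2$, the leading parts of both $\nabla_{q'}\widehat C_h$ and $\nabla_{q'}\widetilde C_h$ equal $\frac{24m}h\Theta$, while those of $\nabla_{p'}\widehat C_h$ and $\nabla_{p'}\widetilde C_h$ equal $2(p'-p)-12\Theta$ plus a term $2h\nabla V(\cdot)$. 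I would first carry out the contraction $\frac1{2\gamma h}\bigl[\nabla_{q'}C_h^*\cdot\phi+\nabla_{p'}C_h^*\cdot\eta\bigr]$ for this common leading part. The choice~\eqref{xiphi} is engineered so that the two contributions proportional to $\Theta\cdot\nabla_{p'}\varphi$ cancel exactly, while the remaining terms recombine into
\[
\tfrac1h(q'-q)\cdot\nabla_{q'}\varphi+\tfrac1h(p'-p)\cdot\nabla_{p'}\varphi-\tfrac1m p'\cdot\nabla_{q'}\varphi,
\]
whose first two terms reproduce the first line of~\eqref{eq:realcostEL} and whose last term, integrated against the second marginal $\rho_k^h$ of $P_k^{h*}$, gives the drift term $-\frac1m\int p'\cdot\nabla_{q'}\varphi\,\rho_k^h$. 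The $2h\nabla V$ term produces, after division by $2\gamma h$, precisely $\nabla V\cdot\nabla_{p'}\varphi$ together with an $O(h)$ remainder.

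Next I would treat the non-transport terms of~\eqref{EuLageqn}. For the divergence term I would compute $\div_{q'}\phi+\div_{p'}\eta$: the mixed second derivatives cancel by symmetry of $\nabla^2\varphi$, leaving $-\frac{\gamma h^2}{6m^2}\Delta_{q'}\varphi+\gamma\Delta_{p'}\varphi$, so that $-\kT\int\rho_k^h(\div_{q'}\phi+\div_{p'}\eta)$ yields the desired $-\gamma\kT\int\rho_k^h\Delta_{p'}\varphi$ plus an $O(h^2)$ remainder. For the friction term $\int\rho_k^h\nabla F(p')\cdot\eta$, the component $\gamma\nabla_{p'}\varphi$ of $\eta$ gives the matching $\gamma\int\rho_k^h\nabla F(p')\cdot\nabla_{p'}\varphi$, while the $-\frac{\gamma h}{2m}\nabla_{q'}\varphi$ component gives an $O(h)$ remainder.

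Then I would collect all remainders into $\omega_k^h$ and bound them. The relevant contributions are: (i) the $O(h)$ and $O(h^2)$ pieces from the $2h\nabla V$, friction and divergence terms, bounded using $|\nabla V(q)|\le C|q|$ from~\eqref{assumpt6}, the Lipschitz bound~\eqref{assumpt4} on $\nabla F$, and $|z|\le\frac12(|z|^2+1)$, giving $Ch[M_2(\rho_{k-1}^h)+M_2(\rho_k^h)+1]$; (ii) for Scheme~2b, the replacement of $\nabla V(q)$ by $\nabla V(q')$, which costs $|\nabla V(q')-\nabla V(q)|\le C|q'-q|$ by~\eqref{assumpt3} and $|q'-q|\le Ch\sqrt{\widehat C_h+N}$ by~\eqref{ineq:qC}, so that Cauchy--Schwarz against $P_k^{h*}$ yields $Ch[\widehat W_h+M_2(\rho_{k-1}^h)+M_2(\rho_k^h)+1]$; (iii) for Scheme~2a, the extra terms $-2h\nabla^2V(q')\cdot p'$, $\sigma_h$ and $\tau_h$ of~\eqref{derivs:tildeCh}, controlled by $|\nabla^2V|\le C$ from~\eqref{assumpt5} and by~\eqref{est:sigmatau}; since $\phi=O(h)$ and $\eta=O(1)$, contracting and dividing by $2\gamma h$ turns both $\sigma_h$ and $\tau_h$ into terms bounded by $Ch\{\widetilde C_h+N+1\}$, giving $Ch[\widetilde W_h+M_2(\rho_{k-1}^h)+M_2(\rho_k^h)+1]$. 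Throughout I would use that $P_k^{h*}$ is optimal, so that $\int C_h^*\,dP_k^{h*}=W_h^*(\rho_{k-1}^h,\rho_k^h)$ and $\int N\,dP_k^{h*}=M_2(\rho_{k-1}^h)+M_2(\rho_k^h)$.

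The main obstacle is the bookkeeping of the numerous error terms and, above all, verifying the exact cancellations forced by the choice~\eqref{xiphi}: the vanishing of the $\Theta\cdot\nabla_{p'}\varphi$ contributions and the recombination of the survivors into the transport-plus-drift structure. After that, the estimate on $\omega_k^h$ is the technical heart, and for Scheme~2a it rests entirely on the already-established bound~\eqref{est:sigmatau}; the full nonlinearity of $\widetilde C_h$ has been absorbed into $\sigma_h$ and $\tau_h$, so no further analysis of the optimal curve $\txi$ is required here.
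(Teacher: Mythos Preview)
Your proposal is correct and follows essentially the same route as the paper: both substitute the specific choice~\eqref{xiphi} into the stationarity condition~\eqref{EuLageqn}, use the derivative formulas of Lemma~\ref{lemma:costproperties} to produce the identity analogous to~\eqref{1stepGradC}, and then estimate the remainder via~\eqref{assumpt}, \eqref{ineqs:qpC}, and (for Scheme~2a) \eqref{est:sigmatau}. The only cosmetic difference is that you bound $\int|q-q'|\,dP_k^{h*}$ via~\eqref{ineq:qC} and Cauchy--Schwarz, whereas the paper uses the equivalent Young-type splitting $|q-q'|\le \tfrac1{2h}|q-q'|^2+\tfrac h2$; the resulting estimates coincide.
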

\noindent
The second moment $M_2$ is defined in~\eqref{def:M2}.

\begin{proof}
For Scheme 2b we combine~\eqref{xiphi} with~\eqref{derivs:hatCh} to yield
\begin{align}
&\nabla_{q'}\widehat{C}_h(q,p;q',p')\cdot\phi(q',p')+\nabla_{p'}\widehat{C}_h(q,p;q',p')\cdot\eta(q',p')\nonumber
\\&\qquad=2\gamma\left[(q'-q)\cdot\nabla_{q'}\varphi(q',p')+(p'-p)\cdot\nabla_{p'}\varphi(q',p')-\frac{h}{m}p'\cdot\nabla_{q'}\varphi(q',p')\right]\nonumber
\\&\qquad\qquad+2\gamma\nabla V(q)\cdot\left[-\frac{h^2}{2m}\nabla_{q'}\varphi(q',p')+h\nabla_{p'}\varphi(q',p')\right].\label{1stepGradC}
\end{align}
Substituting~\eqref{xiphi} and~\eqref{1stepGradC} into the Euler-Lagrange equation~\eqref{EuLageqn}, we obtain
\begin{align}
0&=\frac{1}{h}\int_{\R^{4d}}\left[(q'-q)\cdot\nabla_{q'}\varphi(q',p')+(p'-p)\cdot\nabla_{p'}\varphi(q',p')\right]\widehat P_k^h(dqdpdq'dp')\nonumber
\\&\quad-\frac{1}{m}\int_{\R^{2d}}p'\cdot\nabla_{q'}\varphi(q',p')\rho_k^h(q',p')dq'dp'+\int_{\R^{4d}}\nabla V(q)\cdot \nabla_{p'}\varphi(q',p')\widehat P_k^h(dqdpdq'dp')\nonumber
\\&\quad+\gamma\int_{\R^{2d}}\left[\nabla F(p')\cdot\nabla_{p'}\varphi(q',p')+ \kT\frac{h^2}{6m^2} \Delta_{q'} \varphi(q',p')-\kT \Delta_{p'}\varphi(q',p')\right]\rho_k^h(q',p')dq'dp'\nonumber
\\&\quad-\frac{h}{2m}\int_{\R^{4d}}\big[\nabla V(q)+\gamma \nabla F(p')\big]\cdot\nabla_{q'}\varphi(q',p')\widehat P_k^{h}(dqdpdq'dp')
\label{1stepELeqn}.
\end{align}
Therefore~\eqref{eq:realcostEL} holds with
\begin{eqnarray*}
|\omega_k^h| &=& \bigg|\int_{\R^{4d}}(\nabla V(q)-\nabla V(q'))\cdot \nabla_{p'}\varphi(q',p')\widehat P_k^{h}(dqdpdq'dp')dq'dp'
\\
&&\quad + \kT\frac{h^2}{6m^2}\int_{\R^{2d}} \Delta_{q'} \varphi(q',p')\rho_k^h(q',p')dq'dp'\\
&&\quad-\frac{h}{2m}\int_{\R^{4d}}\big[\nabla V(q)+\gamma \nabla F(p')\big]\cdot\nabla_{q'}\varphi(q',p')\widehat P_k^{h}(dqdpdq'dp')\bigg|\\
&\stackrel{\eqref{assumpt3},\eqref{assumpt4}}\leq& C\int_{\R^{4d}} \big[ |q-q'| + h(|q| + |p'| + 1)\big]\widehat P_k^{h}(dqdpdq'dp')\\
&\leq &C\int_{\R^{4d}} \Big[ \frac1h |q-q'|^2 + h(|q|^2 + |p'|^2 + 1)\Big]\widehat P_k^{h}(dqdpdq'dp')\\
&\stackrel{\eqref{ineqs:qpC}}\leq & Ch\Big[ \widehat W_h(\rho_{k-1}^h,\rho_k^h) + M_2(\rho_{k-1}^h) + M_2(\rho_k^h) + 1\Big].
\end{eqnarray*}
This proves Lemma~\ref{lemma:realcostEL} for Scheme~2b.

\medskip
For Scheme 2a we obtain an identity similar to~\eqref{1stepGradC},
\begin{align}
&\nabla_{q'}\widetilde{C}_h(q,p;q',p')\cdot\phi(q',p')+\nabla_{p'}\widetilde{C}_h(q,p;q',p')\cdot\eta(q',p')\nonumber
\\&\qquad=2\gamma\left[(q'-q)\cdot\nabla_{q'}\varphi(q',p')+(p'-p)\cdot\nabla_{p'}\varphi(q',p')-\frac{h}{m}p'\cdot\nabla_{q'}\varphi(q',p')\right]\nonumber
\\&\qquad\qquad+2\gamma\Big\{h\nabla V(q') + \frac12 \tau_h(q,p;q',p')\Big\}\cdot\left[-\frac{h}{2m}\nabla_{q'}\varphi(q',p')+\nabla_{p'}\varphi(q',p')\right] \notag\\
&\qquad\qquad+ 2\gamma\Bigl\{-h\nabla ^2V(q')\cdot p' + \frac12 \sigma_h(q,p';q',p')\Big\}\cdot \left[-\frac{h^2}{6m^2} \nabla_{q'}\varphi(q',p')+\frac h{2m}\nabla_{p'}\varphi(q',p')  \right].\notag
\end{align}
This leads to the same equation as~\eqref{eq:realcostEL}, but now with error term
\begin{align*}
%\label{eq:errorRealCost}
\omega_k^h&=-\frac{h}{2m}\int_{\R^{4d}}\nabla V(q')\cdot\nabla_{q'}\varphi(q',p')\widetilde{P}_k^h(dqdpdq'dp')\nonumber\\
&\quad+\int_{\R^{4d}}\Big\{\nabla ^2V(q')\cdot p' - \frac1{2h} \sigma_h(q,p;q',p')\Big\}\cdot\left[\frac{h^2}{6m^2}\nabla_{q'}\varphi(q',p')-\frac{h}{2m}\nabla_{p'}\varphi(q',p')\right]\widetilde{P}_k^h(dqdpdq'dp')\notag\\
&\quad + \frac1{2h} \int_{\R^{4d}} \tau_h(q,p;q',p') \left[-\frac{h}{2m}\nabla_{q'}\varphi(q',p')+\nabla_{p'}\varphi(q',p')\right]\widetilde{P}_k^h(dqdpdq'dp') \notag\\
&\quad-\frac{\gamma h}{2m}\int_{\R^{4d}}\nabla F(p')\cdot\nabla_{q'}\varphi(q',p')\rho_k^h(q',p')dq'dp'\nonumber\\
&\quad + \kT\frac{h^2}{6m^2}\int_{\R^{2d}} \Delta_{q'} \varphi(q',p')\rho_k^h(q',p')dq'dp'.\notag
%\\&\quad+\int_{\R^{4d}}m\ddot u_h(h)\cdot\left(-\frac{\gamma h}{2m}\nabla_{q'}\varphi(q',p')+\gamma \nabla_{p'}\varphi(q',p')\right)\widetilde{P}_k^h(dqdpdq'dp')\nonumber
%\\&\quad -\int_{\R^{4d}}m^2\dddot u_h(h)\cdot\left(-\frac{h}{6m^2}\nabla_{q'}\varphi(q',p')+\frac{h}{2m}\nabla_{p'}\varphi(q',p')\right)\widetilde{P}_k^h(dqdpdq'dp').
\end{align*}
We estimate this error as follows, using the notation of the proof of Lemma~\ref{lemma:costproperties:ineqs:qpC}:
\begin{eqnarray*}
|\omega_k^h| &\leq & C\int_{\R^{4d}} \biggl\{ h(1+|q'|) + h|p'| + |\sigma_h| + \frac1h |\tau_h| + h(1+|p'|) + h^2\biggr\} \widetilde{P}_k^h\\
&\leq &C \int_{\R^{4d}} \biggl\{ h(1 + |q'|^2 + |p'|^2) +h\big[\widetilde C_h + N + 1\bigr]\biggr\}\widetilde{P}_k^h\\
&\leq &Ch \int_{\R^{4d}} \bigl[\widetilde C_h + N + 1\bigr] \widetilde{P}_k^h\\
&\leq & Ch\Big[ \widetilde W_h(\rho_{k-1}^h,\rho_k^k) + M_2(\rho_{k-1}^h) + M_2(\rho_k^h) +1 \Big].
\end{eqnarray*}
This concludes the proof of Lemma~\ref{lemma:realcostEL}.
\end{proof}

\subsection{Scheme 2c}

\begin{lemma}
\label{firstvariationLemma}
Let $h>0$ and let $\{\mu_k^h\}$ and $\{\rho^h_k\}$ be the sequences constructed in Scheme 2c. Let $P_k^h(dqdpdq'dp')$ be the optimal plan in the definition of $W_h(\mu_k^h,\rho_k^h)$. Then, for all $\varphi\in C_c^\infty(\R^{2d})$, there holds
\begin{align}
0&=\frac{1}{h}\int_{\R^{4d}}\left[(q'-q+\frac{p}{m}h)\cdot\nabla_{q'}\varphi(q',p')+(p'-p-h\nabla_qV(q))\cdot\nabla_{p'}\varphi(q',p')\right]P_k^h(dqdpdq'dp')\nonumber
\\&\quad-\frac{1}{m}\int_{\R^{2d}}p\cdot\nabla_q\varphi(q,p)\rho_k^h(dqdp)+\int_{\R^{2d}}\nabla V(q)\cdot \nabla_p\varphi(q,p)\rho_k^h(q,p)dqdp\nonumber
\\&\quad +\gamma\int_{\R^{2d}}\left[\nabla F(p)\cdot\nabla_p\varphi(q,p)-\kT \Delta_p\varphi(q,p)\right]\rho_k^h(q,p)dqdp+\zeta_k^h,
\label{EulerLagrangetemp}
\end{align}
where
\begin{align*}
|\zeta_k^h|\leq Ch\big[ h W_h(\mu_k^h, \rho_k^h) + M_2(\mu_k^h) + M_2(\rho_k^h) + 1].
\end{align*}
\end{lemma}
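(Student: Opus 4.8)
The plan is to observe that the gradient-flow step of Scheme~2c is exactly a minimization problem of the form already treated in the general stationarity condition~\eqref{EuLageqn}, now with previous state $\overline\rho=\mu_k^h$, minimizer $\rho=\rho_k^h$, cost function $C_h$, and optimal plan $P_k^h\in\Gamma(\mu_k^h,\rho_k^h)$. It is worth stressing that the Hamiltonian step plays no role in this derivation: it only fixes the measure $\mu_k^h$, while the displacement terms $\tfrac pm h$ and $-h\nabla V(q)$ that appear in the statement are produced purely by the algebra of the cost derivatives and the marginal structure of $P_k^h$; their later telescoping with the streaming relation $\mu_k^h=(\sigma_h^{-1})_\sharp\rho_{k-1}^h$ is deferred to the convergence section. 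I would therefore start from~\eqref{EuLageqn}, insert the test-field choice~\eqref{xiphi}, and substitute the derivative formulas~\eqref{derivs:Ch} for $\nabla_{q'}C_h$ and $\nabla_{p'}C_h$ from Lemma~\ref{lemma:costproperties}.

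First I would compute the contraction $\nabla_{q'}C_h\cdot\phi+\nabla_{p'}C_h\cdot\eta$, exactly as in the proof of Lemma~\ref{lemma:realcostEL}. Writing $B:=\frac mh(q'-q)-\frac{p'-p}2$, the two occurrences of $B$ meet the blocks of the matrix in~\eqref{xiphi} so that the coefficients of $B\cdot\nabla_{p'}\varphi$ cancel ($+12\gamma$ against $-12\gamma$) while those of $B\cdot\nabla_{q'}\varphi$ combine into $\frac{2\gamma h}{m}B=2\gamma(q'-q)-\frac{\gamma h}m(p'-p)$. Dividing by $2\gamma h$ leaves, modulo remainders, the transport term $\frac1h(q'-q)\cdot\nabla_{q'}\varphi$, a cross term $-\frac1m(p'-p)\cdot\nabla_{q'}\varphi$, the momentum term $\frac1h(p'-p)\cdot\nabla_{p'}\varphi$, and a potential term $(\nabla V(q')-\nabla V(q))\cdot\nabla_{p'}\varphi$ coming from the $4m(\nabla V(q')-\nabla V(q))$ piece of $\nabla_{q'}C_h$. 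The decisive bookkeeping step is to split the contributions of $p'$ and of $\nabla V(q')$ against $P_k^h$ through the marginal $(q',p')\sim\rho_k^h$: the $p$-half of $-\frac1m(p'-p)$ and the $-\nabla V(q)$-half of the potential term remain under $P_k^h$ and assemble into the streaming-corrected integrand $(q'-q+\frac pm h)\cdot\nabla_{q'}\varphi+(p'-p-h\nabla V(q))\cdot\nabla_{p'}\varphi$, whereas the $p'$- and $\nabla V(q')$-halves become the single-measure integrals $-\frac1m\int p\cdot\nabla_q\varphi\,\rho_k^h$ and $\int\nabla V(q)\cdot\nabla_p\varphi\,\rho_k^h$ of the second line. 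Here it is essential that $C_h$ uses $\frac{p'-p}2$ (not $\frac{p'+p}2$) and carries the term $2m(q'-q)\cdot(\nabla V(q')-\nabla V(q))$: this is precisely what makes the correct signs appear. The $\A$-contribution is handled as for Scheme~2b: substituting~\eqref{xiphi} into $\int\rho_k^h\nabla F\cdot\eta-\kT\int\rho_k^h[\div_{q'}\phi+\div_{p'}\eta]$, the two cross-divergence terms cancel because mixed partials commute, leaving the main term $\gamma\int[\nabla F\cdot\nabla_p\varphi-\kT\Delta_p\varphi]\rho_k^h$ plus lower-order pieces.

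The remaining work, and the main obstacle, is the estimate of $\zeta_k^h$. The remainders to collect are the $O(h^2)$ potential term $-\frac h{3m}(\nabla V(q')-\nabla V(q))\cdot\nabla_{q'}\varphi$, the $r(q,q')$-contribution $\frac1{2\gamma h}r\cdot\phi$, and the two $\A$-remainders $-\frac{\gamma h}{2m}\nabla F(p')\cdot\nabla_{q'}\varphi$ and $\kT\frac{\gamma h^2}{6m^2}\Delta_{q'}\varphi$ integrated against $\rho_k^h$, each integrated against $P_k^h$ or $\rho_k^h$ as appropriate. I would bound the $r$-term by $|r|\le Ch^2(C_h+N)$ from~\eqref{est:r} together with $|\phi|\le Ch|\nabla\varphi|$, and the $\nabla F$-term by the Lipschitz bound~\eqref{assumpt4}, giving $\int\rho_k^h(1+|p'|)\le 1+M_2(\rho_k^h)$. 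The delicate point is the potential-difference term: using $|\nabla V(q')-\nabla V(q)|\le C|q-q'|$ from~\eqref{assumpt3}, one must invoke the sharp estimate $|q-q'|^2\le Ch^2(C_h+N)$ — valid for the Scheme-2c cost because $\overline C_h\le C_h$ by~\eqref{assumpt2}, combined with~\eqref{est:qChproof} — rather than the cruder $|q-q'|^2\le CC_h$ of~\eqref{ineq:qpC2c}. It is exactly this extra factor $h$ in $|q-q'|$ that produces the $h\,W_h$ (and not $W_h$) term in the claimed bound; the cruder estimate would only yield $Ch\sqrt{W_h}$, which does not close. Summing all pieces and using $\int C_h\,P_k^h=W_h(\mu_k^h,\rho_k^h)$ and $\int N\,P_k^h=M_2(\mu_k^h)+M_2(\rho_k^h)$ then gives $|\zeta_k^h|\le Ch[hW_h(\mu_k^h,\rho_k^h)+M_2(\mu_k^h)+M_2(\rho_k^h)+1]$, completing the proof.
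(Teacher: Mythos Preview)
Your proposal is correct and follows essentially the same route as the paper: substitute \eqref{derivs:Ch} and \eqref{xiphi} into the stationarity condition \eqref{EuLageqn}, split off the $p'$- and $\nabla V(q')$-parts via the $\rho_k^h$-marginal to reach \eqref{EulerLagrangetemp}, and estimate the four remainder pieces. One small remark: for the term $-\tfrac{h}{3m}(\nabla V(q')-\nabla V(q))\cdot\nabla_{q'}\varphi$ the paper does not invoke the sharp bound $|q-q'|\le Ch\sqrt{C_h+N}$ but simply uses $|q'-q|\le|q|+|q'|$, which already places this contribution in the $Ch\bigl(M_2(\mu_k^h)+M_2(\rho_k^h)+1\bigr)$ part of the claimed estimate --- so your assertion that the sharp estimate is indispensable here is stronger than necessary, although your argument is of course also valid.
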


\begin{proof}
From~\eqref{xiphi} and~\eqref{derivs:Ch} we obtain
\begin{align}
&\nabla_{q'}C_h(q,p;q',p')\cdot\phi(q',p')+\nabla_{p'}C_h(q,p;q',p')\cdot\eta(q',p')\nonumber
\\&\qquad=2\gamma\left[(q'-q)\cdot\nabla_{q'}\varphi(q',p')+(p'-p)\cdot\nabla_{p'}\varphi(q',p')-\frac{h}{m}(p'-p)\cdot\nabla_{q'}\varphi(q',p')\right]\nonumber
\\&\qquad\qquad+\gamma\Big[4m(\nabla V(q')-\nabla V(q)) + r(q,q')\Big] \cdot\left\{-\frac{h^2}{6m^2}\nabla_{q'}\varphi(q',p')+ \frac h{2m} \nabla_{p'}\varphi(q',p')\right\}.
%\\&\qquad\qquad -2\gamma\left[\frac{h^2}{3m}r(q,q')\cdot\nabla_{q'}\varphi(q',p')+hr(q,q')\cdot\nabla_{p'}\varphi(q',p')\right].
\label{GradC}
\end{align}
Substituting~\eqref{xiphi} and~\eqref{GradC} into the Euler-Lagrange equation~\eqref{EuLageqn}, we obtain
\begin{align}
0&=\frac{1}{h}\int_{\R^{4d}}\left[(q'-q)\cdot\nabla_{q'}\varphi(q',p')+(p'-p)\cdot\nabla_{p'}\varphi(q',p')\right]P_k^h(dqdpdq'dp')\nonumber
\\&\quad-\frac{1}{m}\int_{\R^{4d}}(p'-p)\cdot\nabla_{q'}\varphi(q',p')P_k^h(dqdpdq'dp')+\int_{\R^{4d}}(\nabla V(q')-\nabla V(q))\cdot \nabla_{p'}\varphi(q',p')P_k^h(dqdpdq'dp')\nonumber
\\&\quad +\gamma\int_{\R^{2d}}\left[\nabla F(p)\cdot\nabla_p\varphi(q,p)-\kT \Delta_p\varphi(q,p)\right]\rho_k^h(q,p)dqdp
+\zeta_k^h\label{ELeqn},
\end{align}
where we estimate the remainder, again using the notation of the proof of Lemma~\ref{lemma:costproperties},
\begin{eqnarray*}
|\zeta_k^h|&=&\Bigg|-\frac{h}{3m}\int_{\R^{4d}}(\nabla V(q')-\nabla V(q))\cdot \nabla_{q'}\varphi(q',p')P_k^h(dqdpdq'dp')
\\&&\qquad+\frac12 \int_{\R^{4d}}r(q,q')\cdot\left[-\frac{h}{6m^2}\nabla_{q'}\varphi(q',p')+\frac1{2m}\nabla_{p'}\varphi(q',p')\right] P_k^h(dqdpdq'dp')
\\&&\qquad-\frac{\gamma h}{2m}\int_{\R^{2d}}\rho_k^h(q,p)\nabla F(p)\cdot \nabla_q\varphi(q,p)dqdp+\kT \frac{\gamma h^2}{6m^2}\int_{\R^{2d}}\rho_k^h(q,p)\Delta_q \varphi(q,p)dqdp\Bigg|\\
&\stackrel{\eqref{assumpt},\eqref{est:r}}\leq& C\int_{\R^{4d}} \big[ h |q'-q| + h^2(C_h + N) + h(1+|p'|) + h^2\big]\, P_k^h(dqdpdq'dp')
\\
&\leq& C\int_{\R^{4d}} \big[ h (|q|^2 + |q'|^2) + h^2(C_h + N) + h(1+|p'|^2) \big]\, P_k^h(dqdpdq'dp')\\
&\leq & Ch\big[ h W_h(\mu_k^h, \rho_k^h) + M_2(\mu_k^h) + M_2(\rho_k^h) + 1].
\end{eqnarray*}
This concludes the proof of Lemma~\ref{firstvariationLemma}.
\end{proof}

\section{A priori estimate: Boundedness of the second moment and entropy }
\label{section:priorestimate}
This section includes some technical lemmas that are needed in order to prove the convergence result of Section~\ref{section:Convergence}.

\begin{lemma}\label{priorbound1:schemes2ab}
Let $\{\rho_k^h\}_{k\geq1}$ be the sequence of the minimizers of Scheme 2a or Scheme 2b for fixed $h>0$.  Then for any positive integer $n$ and sufficiently small $h$, we have
\begin{equation}
\sum_{k=1}^n{W}^*_h(\rho_{k-1}^h,\rho_k^h)\leq
2\gamma h(\A(\rho_0)-\A(\rho_n^h))+Ch^2\sum_{k=0}^{n}M_2(\rho_k^h)+Cnh^2,\label{sumIneqn:scheme2a}
\end{equation}
for some constant $C>0$ independent of $n$, where $W^*_h$ is either $\widetilde W_h$ or $\widehat W_h$. Similarly, if $\{\mu_k^h\}$ and $\{\rho_k^h\}$ are the sequences constructed in Scheme 2c, then
\begin{equation*}
\sum_{k=1}^n{W}_h(\mu_k^h,\rho_k^h)\leq
2\gamma h(\A(\rho_0)-\A(\rho_n^h))+Ch^2\sum_{k=0}^{n}M_2(\rho_k^h)+Cnh^2.
%\label{sumIneqn:scheme2c}
\end{equation*}
\end{lemma}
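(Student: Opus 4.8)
The plan is to exploit the minimizing property of each scheme against a carefully chosen competitor and then telescope the free-energy differences.

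I would first treat Schemes 2a and 2b together. The key observation is that each cost admits an explicit \emph{zero-cost} transport map. For Scheme 2b the map $S_h(q,p) := \bigl(q + \tfrac hm p - \tfrac{h^2}{2m}\nabla V(q),\ p - h\nabla V(q)\bigr)$ satisfies $\widehat C_h(q,p;S_h(q,p))=0$ pointwise, as one checks directly from~\eqref{def:widehatCh}; for Scheme 2a the analogous role is played by the time-$h$ Hamiltonian flow map $\Theta_h$ of $|p|^2/2m+V(q)$, along whose trajectories $m\ddot\xi+\nabla V(\xi)\equiv0$, so that $\widetilde C_h(q,p;\Theta_h(q,p))=0$ by~\eqref{def:widetildeCh}. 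Writing $\nu_k$ for the pushforward $(S_h)_\sharp\rho_{k-1}^h$ (resp. $(\Theta_h)_\sharp\rho_{k-1}^h$), we then have $W_h^*(\rho_{k-1}^h,\nu_k)=0$, so using $\nu_k$ as competitor in the minimization defining $\rho_k^h$ gives the one-step bound
\[
W_h^*(\rho_{k-1}^h,\rho_k^h)\le 2\gamma h\bigl(\A(\nu_k)-\A(\rho_k^h)\bigr).
\]
Splitting $\A(\nu_k)-\A(\rho_k^h) = \bigl(\A(\rho_{k-1}^h)-\A(\rho_k^h)\bigr) + \bigl(\A(\nu_k)-\A(\rho_{k-1}^h)\bigr)$ and summing over $k=1,\dots,n$, the first group telescopes to $2\gamma h(\A(\rho_0)-\A(\rho_n^h))$, which is exactly the leading term in~\eqref{sumIneqn:scheme2a}. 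It then remains only to control the \emph{pushforward defect} $\A(\nu_k)-\A(\rho_{k-1}^h)$.

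The heart of the argument is therefore the estimate
\[
\A\bigl(T_\sharp\rho\bigr) - \A(\rho) \le Ch\bigl(M_2(\rho)+1\bigr), \qquad T\in\{S_h,\Theta_h\},
\]
which I would split according to the two parts of $\A$. For the entropy part, the change-of-variables formula gives $\int (T_\sharp\rho)\log(T_\sharp\rho) = \int\rho\log\rho - \int\rho\log|\det DT|$; the off-diagonal blocks of $DT$ are $O(h)$ but enter the determinant only at order $h^2$, so that $\det DT = 1 + O(h^2)$ uniformly—using $|\nabla^2V|\le C$ from~\eqref{assumpt5}, and in fact $\det D\Theta_h\equiv1$ by symplecticity—whence the entropy contributes at most $Ch^2$. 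For the $F$-part, $T$ shifts the momentum by $-h\nabla V(q)+O(h^2)$, so the change equals $\int[F(p-h\nabla V(q)+O(h^2)) - F(p)]\rho\,dqdp$; bounding the integrand through $|\nabla F(z)|\le C(1+|z|)$ (from~\eqref{assumpt4}) and $|\nabla V(q)|\le C|q|$ (from~\eqref{assumpt6}) and applying Young's inequality gives the pointwise bound $Ch(1+|q|^2+|p|^2)$, hence $Ch(M_2(\rho)+1)$ after integration. Multiplying by $2\gamma h$ and summing yields the remaining terms $Ch^2\sum_k M_2(\rho_k^h) + Cnh^2$.

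Scheme 2c is simpler because $C_h$ vanishes on the diagonal: using $\mu_k^h$ itself as competitor in the gradient-flow step~\eqref{step2} gives $W_h(\mu_k^h,\rho_k^h)\le 2\gamma h(\A(\mu_k^h)-\A(\rho_k^h))$, and since $\mu_k^h = (\sigma_h^{-1})_\sharp\rho_{k-1}^h$ with $\sigma_h$ of the same near-volume-preserving, $O(h)$-momentum-shifting type, the identical defect estimate $\A(\mu_k^h)-\A(\rho_{k-1}^h)\le Ch(M_2(\rho_{k-1}^h)+1)$ applies and the same telescoping concludes. The main obstacle is precisely the defect estimate above—in particular verifying $\det DT = 1 + O(h^2)$ and keeping the momentum-shift term controlled by the second moment through the growth bounds in~\eqref{assumpt}—since the comparison step and the telescoping sum are otherwise routine.
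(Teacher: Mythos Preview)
Your proposal is correct and follows essentially the same route as the paper: choose as competitor the pushforward of $\rho_{k-1}^h$ under the zero-cost map (the time-$h$ Hamiltonian flow for Scheme~2a, its constant-acceleration approximation for Scheme~2b, and the identity on $\mu_k^h$ for Scheme~2c), split $\A$ into its entropy and $F$ parts, control the entropy change via (near-)volume preservation and the $F$ change via the growth bounds~\eqref{assumpt4},~\eqref{assumpt6}, and telescope. One small point where you are in fact more careful than the paper: for Scheme~2b the map $S_h$ is \emph{not} exactly volume-preserving, and your observation that $\det DS_h = 1 + O(h^2)$ (hence the entropy defect is $O(h^2)$) fills a detail the paper leaves to the reader.
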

\begin{proof}
We give the details for Scheme 2a and then comment on the differences for the other schemes.
We first define the operator $\mathbf s_h:\R^{2d}\to\R^{2d}$ as the solution operator over time $h$ for the Hamiltonian system
\begin{equation}
\label{eq:HamSys}
 Q'  = \frac Pm, \quad P' = -\nabla V(Q),
\end{equation}
that is, $\mathbf s_h(q,p)$ is the solution at time $h$ given the initial datum $(q,p)$ at time zero. The operator $\mathbf s_h$ is bijective and volume-preserving.

For any fixed $k\geq 1$, $\rho_k^h$ minimizes the functional $(2h\gamma)^{-1}\widetilde{W}_h(\rho_{k-1}^h,\rho)+\A(\rho)$ over $\rho\in \P_2(\R^{2d})$, i.e.,
\begin{equation}
\widetilde{W}_h(\rho_{k-1}^h,\rho_k^h)+2h\gamma\A(\rho_k^h)\leq \widetilde{W}_h(\rho_{k-1}^h,\rho)+2h\gamma\A(\rho),
\end{equation}
for every $\rho\in \P_2(\R^{2d})$.  In particular by taking $\rho=(\textbf{s}_h^{-1})_\sharp\rho_{k-1}^h=:\rho_*^h$, for which $\widetilde{W}_h(\rho_{k-1}^h,\rho_*^h)=0$, it follows that
\begin{equation}
\widetilde{W}_h(\rho_{k-1}^h,\rho_k^h)\leq 2\gamma h\big[\A(\rho_*^h)-\A(\rho_k^h)\big]
=2\gamma h\big[\F(\rho_*^h)-\F(\rho_k^h)\big]+2\gamma h\big[S(\rho_*^h)-S(\rho_k^h)\big]. \label{inequality:schemes2ab}
\end{equation}
We now estimate each term on the right hand side. Write $(\overline{q},\overline{p})=\mathbf{s}_h(q,p)$. Using  equation~\eqref{eq:HamSys}, we readily estimate that the solution $(Q(t),P(t))$ starting at $(q,p)$ and ending at $(\overline q,\overline p)$ satisfies $\|Q\|_\infty\leq C\left(|\overline{q}|+h|\overline{p}|\right)$, and therefore
\begin{equation*}
\left|\int_0^h\nabla V(Q(t))dt\right|\leq h\sup_{t\in[0,h]}|\nabla V(Q(t))|\leq h\|Q\|_\infty\leq Ch\left(|\overline{q}|+{h}|\overline{p}|\right),
\end{equation*}
so that
\begin{eqnarray*}
F(p)&=&F\Big(\overline{p}+\int_0^h\nabla V(Q(t))dt\Big)
\\&\stackrel{\eqref{assumpt1},\eqref{assumpt4}}\leq& F(\overline{p})+C(|\overline{p}|+1)\left|\int_0^h\nabla V(Q(t))dt\right|+C\left(\int_0^h\nabla V(Q(t))dt\right)^2
\\&\leq& F(\overline{p})+Ch(|\overline{p}|+1)\left(|\overline{q}|+h|\overline{p}|\right)+Ch^2\left(|\overline{q}|+h|\overline{p}|\right)^2
\\&\leq& F(\overline{p})+Ch\big[N(\overline{q},\overline{p})+1\big].
\end{eqnarray*}
Therefore
\begin{align}
\F(\rho_*^h)&=\int_{\R^{2d}}F(p)\rho_*^h(q,p)dqdp=\int_{\R^{2d}}F(p)\rho_{k-1}^h(\overline{q},\overline{p})d\overline{q}d\overline{p}\nonumber
\\&\leq \int_{\R^{2d}}(F(\overline{p}) + ChN(\overline{q},\overline{p})+ Ch)\rho_{k-1}^h(\overline{q},\overline{p})d\overline{q}d\overline{p}\leq \F(\rho_{k-1}^h)+ChM_2(\rho_{k-1}^h)+Ch.\label{ineqFriction:schemes2ab}
\end{align}
For the entropy term, we have, since $\mathbf s_h$ is volume-preserving and bijective,
\begin{align}
S(\rho_*^h)&=\beta^{-1}\int_{\R^{2d}}\rho_*^h(q,p)\log \rho_*^h(q,p) dqdp=\beta^{-1}\int_{\R^{2d}}\rho_{k-1}^h(\mathbf{s}_h(q,p))\log\rho_{k-1}^h(\mathbf{s}_h(q,p))dqdp=S(\rho_{k-1}^h).\label{ineqEntropy:schemes2ab}
\end{align}
From (\ref{inequality:schemes2ab}), (\ref{ineqFriction:schemes2ab}), and (\ref{ineqEntropy:schemes2ab}), we obtain
\begin{equation*}
\widetilde{W}_h(\rho_{k-1}^h,\rho_k^h)\leq 2\gamma h(\A(\rho_{k-1}^h)-\A(\rho_k^h))+Ch^2M_2(\rho_{k-1}^h)+Ch^2.
\end{equation*}
Summing over $k=1$ to $n$ we obtain~\eqref{sumIneqn:scheme2a}.

For Scheme 2b, the equation~\eqref{eq:HamSys} only modifies slightly, in that the acceleration becomes constant:
\[
Q' = \frac Pm, \quad P' = -V(q).
\]
Similar estimates lead to the same result.

For Scheme 2c, the proof is again similar, by taking $\rho_*^h:=\mu_k^h$ and estimating the difference $\A(\mu_k^h)-\A(\rho_{k-1}^h)$ as is done above.
\end{proof}

\begin{lemma}\label{priorbound2}
There exist positive constants $T_0$, $h_0$, and $C$,  independent of the initial data, such that for any $0<h\leq h_0$, the solutions $\{\rho_k^h\}_{k\geq 1}$ for Scheme 2a, Scheme 2b, or Scheme 2c, satisfy
\begin{equation}
M_2(\rho_k^h)\leq C\big[M_2(\rho_0)+1\big] ~ \text{ and } ~|S(\rho_k^h)|\leq C\big[S(\rho_0) + M_2(\rho_0) + 1\big] ~ \text{for any }k\leq K_0,
\end{equation}
where $K_0=\lceil{T_0}/{h}\rceil$.
\end{lemma}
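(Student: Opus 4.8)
The plan is to control the second moment and the entropy \emph{simultaneously}, through a single discrete Grönwall estimate for the Lyapunov-type quantity $E_k := M_2(\rho_k^h) + C_0\,\A(\rho_k^h)$. Neither $M_2$ nor $S$ can be bounded on its own: the one-step free-energy comparison of Lemma~\ref{priorbound1:schemes2ab} controls the transport cost only through the \emph{decrease} of $\A$, which contains the (possibly very negative) entropy, while the entropy is itself only bounded below in terms of $M_2$. I would first carry out the argument for Schemes~2a and~2b, where $\{\rho_k^h\}$ is produced directly; Scheme~2c then reduces to the same computation by passing through the intermediate measure $\mu_k^h$ of~\eqref{step1}. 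Since the explicit map $\sigma_h$ in~\eqref{Hamiltonmap} is a bijective, volume-preserving near-identity, one checks that $M_2(\mu_k^h)\le(1+Ch)M_2(\rho_{k-1}^h)+Ch$ and $S(\mu_k^h)=S(\rho_{k-1}^h)$, so every estimate below transfers with $\rho_{k-1}^h$ replaced by $\mu_k^h$ at the cost of an extra $O(h)$ term.

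The first ingredient is a one-step bound on the second moment. Writing $P_k^{h*}$ for the optimal plan in $W_h^*(\rho_{k-1}^h,\rho_k^h)$ and expanding $M_2(\rho_k^h)-M_2(\rho_{k-1}^h)=\int[N(q',p')-N(q,p)]\,P_k^{h*}$, I would split the integrand into the squared increments $\gamma^2|q'-q|^2+|p'-p|^2$ and the cross terms $2\gamma^2(q'-q)\cdot q+2(p'-p)\cdot p$. Using the cost inequalities~\eqref{ineqs:qpC} of Lemma~\ref{lemma:costproperties}, the $q$-increment is $O(h)$ (because of the factor $h^2$ in the bound on $|q'-q|^2$), whereas the $p$-increment is only $O(1)$ in the cost; estimating the cross terms by Young's inequality with a weight $\sim h$ then yields
\[
M_2(\rho_k^h)-M_2(\rho_{k-1}^h)\le \frac{C}{h}\,W_h^*(\rho_{k-1}^h,\rho_k^h)+Ch\big[M_2(\rho_{k-1}^h)+M_2(\rho_k^h)+1\big].
\]

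Next I would combine this with the minimization inequality underlying Lemma~\ref{priorbound1:schemes2ab}, namely $\tfrac1h W_h^*(\rho_{k-1}^h,\rho_k^h)\le 2\gamma\big(\A(\rho_{k-1}^h)-\A(\rho_k^h)\big)+Ch[M_2(\rho_{k-1}^h)+1]$. Substituting this turns the transport term into $C(\A(\rho_{k-1}^h)-\A(\rho_k^h))$, and adding $C_0(\A(\rho_k^h)-\A(\rho_{k-1}^h))$ to both sides produces a closed recursion for $E_k$,
\[
E_k-E_{k-1}\le Ch\big[M_2(\rho_{k-1}^h)+M_2(\rho_k^h)+1\big].
\]
The loop is closed by the entropy--second-moment inequality: comparison with a Gaussian gives, for every $\epsilon>0$, $\int\rho\log\rho\ge -\epsilon M_2(\rho)-C_\epsilon$ (as in~\cite{JKO98}), so, since $F\ge0$ by~\eqref{assumpt1}, one has $\A\ge -\epsilon M_2-C_\epsilon$, and hence $M_2(\rho_k^h)\le C(E_k+1)$ once $C_0\epsilon<1$. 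Feeding this back gives $E_k-E_{k-1}\le Ch(E_{k-1}+E_k+1)$, and the discrete Grönwall lemma — valid once $h\le h_0$ makes the coefficient of $E_k$ smaller than one — together with $kh\le T_0+h$ yields $E_k\le C(E_0+1)$ for all $k\le K_0=\lceil T_0/h\rceil$. Since $E_0=M_2(\rho_0)+C_0\A(\rho_0)$ and $\A(\rho_0)<\infty$, this gives the desired bound $M_2(\rho_k^h)\le C[M_2(\rho_0)+1]$.

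With $M_2$ under control, the entropy bound is immediate. The near-monotonicity of $\A$ used above gives $\A(\rho_k^h)\le\A(\rho_0)+Ch\sum_{j<k}[M_2(\rho_j^h)+1]\le\A(\rho_0)+C$, and since $F\ge0$ this bounds $S(\rho_k^h)=\A(\rho_k^h)-\F(\rho_k^h)$ from above; the lower bound $S(\rho_k^h)\ge -\epsilon M_2(\rho_k^h)-C_\epsilon$ is again the Gaussian entropy inequality. Combining with the $M_2$ bound gives $|S(\rho_k^h)|\le C[S(\rho_0)+M_2(\rho_0)+1]$. I expect the genuine difficulty to be precisely the coupling just described — arranging the Lyapunov combination $M_2+C_0\A$ so that the indefinite-sign free energy is tamed by the entropy--moment inequality in a single Grönwall step, with $\epsilon$, $C_0$, and $h_0$ chosen consistently — rather than any individual estimate. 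A secondary, bookkeeping obstacle is keeping all constants uniform across the three schemes, in particular transferring the Scheme~2c estimates through the Hamiltonian push-forward $\mu_k^h$ while preserving volume and the $1+O(h)$ moment control.
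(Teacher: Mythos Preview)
Your approach is correct and genuinely different from the paper's. The paper telescopes at the level of the \emph{square roots} $(\int|p|^2\rho_i^h)^{1/2}$ and $(\int|q|^2\rho_i^h)^{1/2}$: it bounds the one-step increment of each via the triangle inequality and~\eqref{ineqs:qpC}, sums from $1$ to $k$, and then squares, which produces a factor $k$ in front of $\sum_i W_h^*(\rho_{i-1}^h,\rho_i^h)$. It then substitutes the \emph{summed} version of Lemma~\ref{priorbound1:schemes2ab}, uses $S(\rho_k^h)\geq -C-CM_2(\rho_k^h)$ to close, and finally chooses $T_0$ small so that the coefficient $C_1kh$ of $M_2(\rho_k^h)$ is at most~$\tfrac12$; a two-pass argument (first bound $\sum_{i\leq K_0}M_2(\rho_i^h)$, then feed back to bound each $M_2(\rho_k^h)$) finishes. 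Your route packages the same ingredients into the single Lyapunov quantity $E_k=M_2(\rho_k^h)+C_0\A(\rho_k^h)$ and runs a one-step discrete Gr\"onwall, which is shorter and makes the moment--entropy coupling explicit; the paper's route is more hands-on but avoids having to tune the constant $C_0$ against the Gaussian-entropy parameter~$\epsilon$.

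One small correction for Scheme~2c: the map $\sigma_h$ in~\eqref{Hamiltonmap} is \emph{not} volume-preserving---its Jacobian determinant is $\det\bigl(I+\tfrac{h^2}{m}\nabla^2 V(q)\bigr)=1+O(h^2)$---so $S(\mu_k^h)=S(\rho_{k-1}^h)+O(h^2)$ rather than exact equality. This $O(h^2)$ defect is harmless in your recursion and the rest of the argument goes through unchanged.
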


\begin{proof}
We detail the proof for Scheme 2a; the modifications for Schemes 2b and 2c are very minor.

For a fixed $i$, let $\widetilde P_i\in \Gamma(\rho_{i-1}^h,\rho_i^h)$ be the optimal plan in the definition of $ \widetilde W_h(\rho_{i-1}^h,\rho_i^h)$. We have
\begin{align}
&\left(\int_{\R^{2d}}|p|^2\rho_i^h(q,p)dqdp\right)^{\frac{1}{2}}=\left(\int_{\R^{4d}}|p'|^2\widetilde P_i^h(dqdpdq'dp')\right)^{\frac{1}{2}}\nonumber
\\&\qquad\leq\left(\int_{\R^{4d}}|p'-p|^2\widetilde P_i^h(dqdpdq'dp')\right)^{\frac{1}{2}}+\left(\int_{\R^{4d}}|p|^2\widetilde P_i^h(dqdpdq'dp')\right)^{\frac{1}{2}}\nonumber
\end{align}
By \eqref{ineq:pC}, we estimate
\begin{equation*}
\left(\int_{\R^{4d}}|p'-p|^2\widetilde P_i^h(dqdpdq'dp')\right)^{\frac{1}{2}}\leq C\widetilde {W}_h(\rho_{i-1}^h,\rho_i^h)^\frac{1}{2}+Ch\big[M_2(\rho_i^h)^\frac{1}{2}+M_2(\rho_{i-1}^h)^\frac{1}{2}\big],
\end{equation*}
and hence,
\begin{equation*}
\left(\int_{\R^{2d}}|p|^2\rho_i^h(q,p)dqdp\right)^{\frac{1}{2}}
\leq \left(\int_{\R^{2d}}|p|^2\rho_{i-1}^h(q,p)dqdp\right)^{\frac{1}{2}}
 + C\widetilde {W}_h(\rho_{i-1}^h,\rho_i^h)^{\frac{1}{2}}
 + Ch\big[M_2(\rho_i^h)^\frac{1}{2}+M_2(\rho_{i-1}^h)^\frac{1}{2}\big].\label{p-estimate1}
\end{equation*}
Summing over $i$ from $1$ to $k$ we obtain
\begin{align*}
\left(\int_{\R^{2d}}\abs{p}^2\rho_k^h(q,p)dqdp\right)^{\frac{1}{2}}&\leq C\sum_{i=1}^k\widetilde {W}_h(\rho_{i-1}^h,\rho_i^h)^{\frac{1}{2}}+Ch\sum_{i=1}^kM_2(\rho_{i-1}^k)^{\frac{1}{2}}+\left(\int_{\R^{2d}}\abs{p}^2\rho_0(q,p)dqdp\right)^{\frac{1}{2}}
\\&\leq C\sum_{i=1}^k\widetilde {W}_h(\mu_i^h,\rho_i^h)^{\frac{1}{2}}+Ch\sum_{i=1}^kM_2(\rho_i^k)^{\frac{1}{2}}+CM_2(\rho_0)^{\frac{1}{2}}.
\end{align*}
Therefore
\begin{align}
\int_{\R^{2d}}\abs{p}^2\rho_k^h(q,p)dqdp&\leq C\left(\sum_{i=1}^k\widetilde {W}_h(\mu_i^h,\rho_i^h)^{\frac{1}{2}}\right)^2 + Ch^2\left(\sum_{i=1}^kM_2(\rho_i^h)^{\frac{1}{2}}\right)^2+CM_2(\rho_0)\nonumber
\\&\leq Ck\sum_{i=1}^k\widetilde {W}_h(\mu_i^h,\rho_i^h)+Ckh^2\sum_{i=1}^kM_2(\rho_i^h)+CM_2(\rho_0). \label{M2p}
\end{align}
Similarly, we use~\eqref{est:qChproof} and the fact that
\begin{equation*}
q'=\frac{h}{2m\sqrt{3}}2\sqrt{3}\left(\frac{m}{h}(q'-q)-\frac{p+p'}{2}\right)+\frac{h}{2m}(p'+p)+q
\end{equation*}
to derive that
\begin{align*}
&\left(\int_{\R^{2d}}|q|^2\rho_i^h(q,p)dqdp\right)^{\frac{1}{2}}=\left(\int_{\R^{4d}}|q'|^2\widetilde P_i^h(dqdpdq'dp')\right)^{\frac{1}{2}}\nonumber
\\&\qquad\leq \frac{h}{2m\sqrt{3}}\left(\int_{\R^{4d}}12\left|\frac{m}{h}(q'-q)-\frac{p'+p}{2}\right|^2\widetilde P_i^h(dqdpdq'dp')\right)^\frac{1}{2}
 +\frac{h}{2m}\left(\int_{\R^{4d}}|p'|^2\widetilde P_i^h(dqdpdq'dp')\right)^\frac{1}{2}
\\&\qquad\qquad+\frac{h}{2m}\left(\int_{\R^{4d}}|p|^2\widetilde P_i^h(dqdpdq'dp')\right)^\frac{1}{2}+\left(\int_{\R^{2d}}|q|^2\rho_{i-1}^h(q,p)dqdp\right)^\frac{1}{2}
\\&\qquad\leq Ch\widetilde {W}_h(\rho_{i-1}^h,\rho_i^h)^\frac{1}{2}
+Ch\Big[M_2(\rho_{i-1}^h)^\frac{1}{2}+M_2(\rho_i^h)^\frac{1}{2}\Big]+\left(\int_{\R^{2d}}|q|^2\rho_{i-1}^h(q,p)dqdp\right)^\frac{1}{2}.
\end{align*}
Summing over $i$ from $1$ to $k$, we obtain
\begin{equation*}
\left(\int_{\R^{2d}}|q|^2\rho_k^h(q,p)dqdp\right)^{\frac{1}{2}}\leq Ch\sum_{i=1}^k\widetilde {W}_h(\rho_{i-1}^h,\rho_i^h)^\frac{1}{2}+Ch\sum_{i=1}^kM_2(\rho_i^h)^\frac{1}{2}+CM_2(\rho_0)^\frac{1}{2}
\end{equation*}
and therefore,
\begin{equation}
\int_{\R^{2d}}\gamma^2\abs{q}^2\rho_k^h(q,p)dqdp\leq Ckh^2\sum_{i=1}^k\widetilde {W}_h(\rho_{i-1}^h,\rho_i^h)+Ckh^2\sum_{i=1}^kM_2(\rho_i^h)+CM_2(\rho_0). \label{M2q}
\end{equation}
From (\ref{M2p}) and (\ref{M2q}) it holds that
\begin{align*}
M_2(\rho_k^h)&=\int_{\R^{2d}}(\abs{\gamma q}^2+\abs{p}^2)\rho_k^h(q,p)dqdp
\leq Ck\sum_{i=1}^k\widetilde {W}_h(\rho_{i-1}^h,\rho_i^h)+Ckh^2\sum_{i=1}^kM_2(\rho_i^h)+CM_2(\rho_0).
\end{align*}
Applying Lemma \ref{priorbound1:schemes2ab} with $n=k$, it follows that
\begin{align}
M_2(\rho_k^h)&\leq Ck\left[h(\A(\rho_0)-\A(\rho_k^h))+Ch^2\sum_{i=0}^{k}M_2(\rho_i^h)+Ckh^2\right]+Ckh^2\sum_{i=1}^kM_2(\rho_i^h)+CM_2(\rho_0)\nonumber
\\&\leq -CkhS(\rho_k^h)+Ckh^2\sum_{i=1}^kM_2(\rho_i^k)
+CM_2(\rho_0)
+Ckh\A(\rho_0)
%+Ckh
+Ck^2h^2.\label{M2}
\end{align}
By inequality (29) in \cite{JKO98},  $S(\rho_k^h)$ is bounded from below by $M_2(\rho_k^h)$,
\begin{equation}
S(\rho_k^h)\geq -C- CM_2(\rho_k^h).\label{EntropyBoundBelow}
\end{equation}
Substituting \eqref{EntropyBoundBelow} into~\eqref{M2} we have
\begin{equation}
M_2(\rho_k^h)\leq C_1^2kh^2\sum_{i=1}^kM_2(\rho_i^k)+ C_1khM_2(\rho_k^h)+C_1(k^2h^2+1)
+ C_1 M_2(\rho_0),\label{M2estimate}
\end{equation}
where we fix the constant $C_1$, and use it to set the time horizon $T_0$:
\begin{equation}
T_0=\frac{1}{4C_1},~~K_0=\left\lceil\frac{T_0}{h}\right\rceil.
\end{equation}
We emphasize that $C_1$, and hence $T_0$, is independent of the initial data. We now choose $h_0\leq T_0$ so small that for all $h\leq h_0$ we have $K_0h\leq 2T_0$ and $C_1K_0h\leq\frac{1}{2}$. Then it follows from~\eqref{M2estimate} that, for any $h\leq h_0, k\leq K_0$,
\begin{equation}
\frac34M_2(\rho_k^h)\leq C_1^2kh^2\sum_{i=1}^kM_2(\rho_i^h)+C_1(4T_0^2+1)+ C_1 M_2(\rho_0).\label{M2sumestimate}
\end{equation}
Hence
\begin{align}
\frac34\sum_{i=1}^{K_0}M_2(\rho_i^h)&\leq C_1^2K_0^2h^2\sum_{i=1}^{K_0}M_2(\rho_i^h)+K_0(T_0 + C_1)+ C_1 M_2(\rho_0)\nonumber
\\&\leq4 C_1^2T_0^2\sum_{i=1}^{K_0}M_2(\rho_i^h)+K_0(T_0+C_1)+ C_1 M_2(\rho_0)\\
&\leq \frac14\sum_{i=1}^{K_0}M_2(\rho_i^h)+K_0(T_0+C_1)+ C_1 M_2(\rho_0).\nonumber
\end{align}
Consequently,
\begin{equation}
\sum_{i=1}^{K_0}M_2(\rho_i^h)\leq 2K_0(T_0+C_1)+ 2C_1 M_2(\rho_0). \label{M2sumestimatefinal}
\end{equation}
Substituting~\eqref{M2sumestimatefinal} into~\eqref{M2sumestimate}, we obtain
\begin{equation}
M_2(\rho_k^h)\leq \frac23\Big(2 +K_0\Big) (T_0+C_1) + C_1 M_2(\rho_0). \label{M2BoundAbove}
\end{equation}
This finishes the proof of the boundedness of $M_2(\rho_k^h)$.

We now show that the entropy $S(\rho_k^h)$ is also bounded.
From~\eqref{EntropyBoundBelow} and~\eqref{M2BoundAbove}, it follows that $S(\rho_k^h)$ is bounded from below.
It remains to find an upper bound. Applying Lemma~\ref{priorbound1:schemes2ab} for $n=k$, and noting that  $F(\rho_k^h)\geq 0$, $\widetilde W_h(\rho_{i-1}^h,\rho_i^h)\geq 0$ for all $i$, we have
\begin{align}
S(\rho_k^h)&\leq \A(\rho_0)+Ch\sum_{i=0}^kM_2(\rho_i^h)+Ckh
\leq Ch\sum_{i=1}^kM_2(\rho_i^h)+C\big[S(\rho_0)+M_2(\rho_0)\big]+2CT_0.\label{EntropyBoundABOVE}
\end{align}
By combining with~\eqref{M2sumestimatefinal} we obtain the upper bound for the entropy.
This completes the proof of the lemma.
\end{proof}

The following lemma extends Lemma \ref{priorbound2} to any $T>0$. The proof is the same as Lemma 5.3 in \cite{Hua00}, and we omit it.
\begin{lemma}
\label{lemma:priorbound3:Scheme2a}
Let $\{\rho_k^h\}_{k\geq1}$ be the sequence of the minimizers of Scheme 2a or Scheme 2b for fixed $h>0$. For any $T>0$, there exists a constant $C>0$ depending  on $T$ and on the initial data such that
\begin{equation}
M_2(\rho_k^h)\leq C,\label{2moment}
\end{equation}
%\begin{equation}
%|S(\rho_k^h)|\leq C, \label{entropybound}
%\end{equation}
\begin{equation}
\sum_{i=1}^k W_h^*(\rho_{i-1}^h,\rho_i^h)\leq Ch, \label{summetric}
\end{equation}
\begin{equation}
\int_{\R^{2d}}\max\{\rho_k^h\log \rho_k^h, 0\}\,dqdp\leq C, \label{absoluteentropy}
\end{equation}
for any $h\leq h_0$ and $k\leq K_h$, where
\begin{equation*}
K_h=\left\lceil\frac{T}{h}\right\rceil.
\end{equation*}
For Scheme 2c the same inequalities hold, with~\eqref{summetric} replaced by
\begin{equation*}
\sum_{i=1}^k W_h(\mu_i^h,\rho_i^h)\leq Ch. \label{summetric-scheme2c}
\end{equation*}
\end{lemma}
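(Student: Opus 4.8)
The plan is to promote the short-time bounds of Lemma~\ref{priorbound2} to the arbitrary horizon $[0,T]$ by iterating over finitely many blocks of length $T_0$. The decisive feature of Lemma~\ref{priorbound2} is that $T_0$ and the constant $C$ are \emph{independent of the initial data}, the dependence on $\rho_0$ entering only through the explicit quantities $M_2(\rho_0)$ and $S(\rho_0)$. Since each scheme is defined recursively---$\rho_k^h$ is determined by $\rho_{k-1}^h$ alone (and, for Scheme~2c, by the intermediate $\mu_k^h$ built from $\rho_{k-1}^h$)---the tail $\{\rho_k^h\}_{k\geq jK_0}$ is precisely the sequence that the scheme produces when started from the datum $\rho_{jK_0}^h$. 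I would therefore cover $\{1,\dots,K_h\}$ by $N:=\lceil T/T_0\rceil$ consecutive blocks of length $K_0=\lceil T_0/h\rceil$ (noting $NK_0\geq K_h$ since $K_0\geq T_0/h$ and $N\geq T/T_0$) and apply Lemma~\ref{priorbound2} on each block with the updated initial datum $\rho_{(j-1)K_0}^h$.

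First I would propagate the second moment. Setting $a_j:=M_2(\rho_{jK_0}^h)$, Lemma~\ref{priorbound2} gives $M_2(\rho_k^h)\leq C(a_{j-1}+1)$ for $(j-1)K_0<k\leq jK_0$, and in particular $a_j\leq C(a_{j-1}+1)$. This elementary linear recursion yields a bound on $a_N$, and hence on every $M_2(\rho_k^h)$ with $k\leq K_h$, depending only on $N$ (equivalently $T$) and on $M_2(\rho_0)$; this is~\eqref{2moment}. The same block-iteration applied to the entropy estimate controls $|S(\rho_k^h)|$ by a constant depending on $T$, $M_2(\rho_0)$, and $S(\rho_0)$, now using that $M_2$ has already been bounded uniformly along the whole interval.

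Next I would read off~\eqref{summetric} from the summed estimate of Lemma~\ref{priorbound1:schemes2ab} with $n=k\leq K_h$: its right-hand side is $2\gamma h(\A(\rho_0)-\A(\rho_k^h))+Ch^2\sum_{i=0}^kM_2(\rho_i^h)+Ckh^2$. Since $\A=\F+S$ is now bounded (we have $0\leq\F(\rho)\leq C(1+M_2(\rho))$ from the quadratic growth of $F$, and $|S|$ is controlled), the first term is $O(h)$, while the remaining two terms, having $k\lesssim T/h$ summands each $O(h^2)$, are also $O(h)$; this gives $\sum_{i=1}^kW_h^*(\rho_{i-1}^h,\rho_i^h)\leq Ch$, and the Scheme~2c variant follows verbatim with $W_h(\mu_i^h,\rho_i^h)$. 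Finally, for~\eqref{absoluteentropy} I would split $\int\rho_k^h\log\rho_k^h=\int\max\{\rho_k^h\log\rho_k^h,0\}+\int\min\{\rho_k^h\log\rho_k^h,0\}$; the negative part is bounded below by $-C-CM_2(\rho_k^h)$ through the Gaussian-comparison estimate underlying~\eqref{EntropyBoundBelow}, while the full integral equals $\kT^{-1}S(\rho_k^h)$ and is bounded above, so subtracting gives $\int\max\{\rho_k^h\log\rho_k^h,0\}\leq\kT^{-1}S(\rho_k^h)+C+CM_2(\rho_k^h)\leq C$.

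I do not expect a genuine obstacle: once the restart (semigroup) property is recognized, the argument is bookkeeping, which is why it coincides with Lemma~5.3 of~\cite{Hua00}. The one point deserving care is the restart for Scheme~2c, where the state passes through the intermediate measure $\mu_k^h$: one must check that $M_2(\mu_{(j-1)K_0+1}^h)$ and $\A(\mu_{(j-1)K_0+1}^h)$ inherit bounds from $\rho_{(j-1)K_0}^h$---exactly the estimates already contained in the proofs of Lemmas~\ref{priorbound1:schemes2ab} and~\ref{priorbound2}---so that Lemma~\ref{priorbound2} genuinely applies on the next block with $\rho_{(j-1)K_0}^h$ as datum.
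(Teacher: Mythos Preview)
Your proposal is correct and is precisely the standard block-iteration argument that the paper has in mind: the paper omits the proof entirely, stating only that it is ``the same as Lemma~5.3 in~\cite{Hua00}'', which is exactly the restart-and-iterate strategy you describe. Your bookkeeping---propagating $M_2$ via the recursion $a_j\leq C(a_{j-1}+1)$, then $|S|$, then reading off~\eqref{summetric} from Lemma~\ref{priorbound1:schemes2ab} and~\eqref{absoluteentropy} from the split of $\rho\log\rho$---matches the intended route, and your remark about the Scheme~2c restart through $\mu_k^h$ correctly flags the only place requiring an extra line.
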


\section{Proof of Theorem~\ref{theo:maintheorem}}
\label{section:Convergence}

In this section we bring all the parts together to prove Theorem~\ref{theo:maintheorem}. The structure of this proof  is the same as that of e.g.~\cite{JKO98,Hua00}, and we refer to those references for the parts that are very similar. The main difference lies in the convergence of the discrete Euler-Lagrange equations for each of the cases to the weak formulation of the Kramers equation as $h\to0$.

Throughout we fix $T>0$ and for each $h>0$ we set
\[
K_h \colonequals \lceil T/h\rceil.
\]
The proof of the space-time weak compactness~\eqref{weaklyconverence} is the same for the three schemes. Let $(\rho_k^h)_k$ be the sequence of minimizers constructed by any of the three schemes, and let $t\mapsto \rho^h(t)$ be the piecewise-constant interpolation~\eqref{interpolation}. By Lemma~\ref{lemma:priorbound3:Scheme2a} we have
\begin{align}
M_2(\rho^h(t))+\int_{\R^{2d}}\max\{\rho^h(t)\log \rho^h(t), 0\}\,dqdp&\leq C,\qquad\text{for all} \quad 0\leq t \leq T. \label{sumM2Entropy}
\end{align}
Since the function $z\mapsto \max\{z\log z,0\}$ has super-linear growth, \eqref{sumM2Entropy} guarantees that there exists a subsequence, denoted again by $\rho^h$, and a function $\rho\in L^1((0,T)\times\R^{2d})$ such that
\begin{equation}
\rho^h\rightarrow \rho ~\text{ weakly in }~ L^1((0,T)\times\R^{2d}).
\end{equation}
This proves~\eqref{weaklyconverence}.

The proof of the stronger convergence~\eqref{pointwiseconvergence} and of the continuity~\eqref{intinialconvergence} at $t=0$ follows the same lines as in~\cite{JKO98,Hua00}. The main estimate is the `equi-near-continuity' estimate
\[
d\big(\rho^h(t_1),\rho^h(t_2)\big)^2 \leq C(|t_2-t_1| + h),
\]
where $d(\rho_0,\rho_1)$ is the metric generated by the quadratic cost $|q-q'|^2 + |p-p'|^2$. This estimate follows from the inequality (see~\eqref{ineqs:qpC})
\[
|q-q'|^2 + |p-p'|^2 \leq C\big[C_h^*(q,p;q',p') + h^2 N(q,p) + h^2 N(q',p')\big],
\]
and the estimates~\eqref{sumM2Entropy} and~\eqref{summetric}; see~\cite[Theorem~5.2]{Hua00}.

The only remaining statement of Theorem~\ref{theo:maintheorem} is the characterization of the limit in terms of the solution of the Kramers equation, and we now describe this.
%\red{Do we want to comment on the main ingredients of the proof?}

%Fix $\varphi\in C_c^\infty((-\infty,T)\times \R^{2d})$, and l
Let $\rho^h$ be generated by one of the three schemes. We now prove that the limit $\rho$ satisfies the weak version of the Kramers equation~\eqref{weakKReqn}.
%\begin{align}
%&-\int_0^T\int_{\R^{2d}}\rho(t,q,p)\partial_t\varphi(t,q,p)dqdpdt\nonumber
%\\&\quad=\int_0^T\int_{\R^{2d}}\left[\frac{p}{m}\cdot\nabla_q\varphi(t,q,p)-(\nabla V(q)+\gamma\nabla F(p))\cdot \nabla_p\varphi(t,q,p)+\gamma \kT \Delta_p\varphi(t,q,p)\right]\rho(t,q,p)dqdpdt\nonumber
%\\&\qquad\qquad+\int_{\R^{2d}}\rho_0(q,p)\varphi(0,q,p)dqdpdt
%\qquad \text{for all }\varphi\in C_c^\infty((-\infty,T)\times \R^{2d}).
% \label{weakformulation}
%\end{align}
Fix $T>0$ and $\varphi\in C_c^\infty((-\infty,T)\times \R^{2d})$; all constants~$C$ below depend on the parameters of the problem, on the initial datum $\rho_0$, and on~$\varphi$, but are independent of $k$ and of $h$. We first discuss Schemes 2a and 2b.

Let $P_k^{h*}\in\Gamma(\rho_{k-1}^h,\rho_k^h)$ be the optimal plan for $W_h^*(\rho_{k-1}^h,\rho_k^h)$, where the star indicates the quantities associated with either Scheme 2a or Scheme 2b.
For any $0<t<T$, we have
\begin{align}
&\int_{\R^{2d}}\big[\rho_k^h(q,p)-\rho_{k-1}^h(q,p)\big]\,\varphi(t,q,p)dqdp \notag\\
&\qquad=\int_{\R^{2d}}\rho_k^h(q',p')\varphi(t,q',p')dq'dp'-\int_{\R^{2d}}\rho_{k-1}^h(q,p)\varphi(t,q,p)dqdp\nonumber
\\
&\qquad=\int_{\R^{4d}}\big[\varphi(t,q',p')-\varphi(t,q,p)\big]\,P_k^{h*}(dqdpdq'dp') \nonumber
\\&\qquad=\int_{\R^{4d}}\big[(q'-q)\cdot\nabla_{q'}\varphi(t,q',p')+(p'-p)\cdot\nabla_{p'}\varphi(t,q',p')\big]P_k^{h*}(dqdpdq'dp') +\varepsilon_k,\label{timederivativeappro}
\end{align}
where
\begin{eqnarray}
|\varepsilon_k|&\leq& C\int_{\R^{4d}}\big[|q'-q|^2+|p'-p|^2\big]\,P_k^{h*}(dqdpdq'dp')\notag\\
&\stackrel{\eqref{ineqs:qpC}}\leq& C W^*_h(\rho_{k-1}^h,\rho_k^h) + Ch^2\big[M_2(\rho_{k-1}^h) + M_2(\rho_k^h)\big]\notag\\
&\stackrel{\eqref{sumM2Entropy}}\leq& C W^*_h(\rho_{k-1}^h,\rho_k^h) + Ch^2.
\label{epsilon}
\end{eqnarray}
By combining~\eqref{timederivativeappro} with~\eqref{eq:realcostEL} we find
\begin{multline}
\int_{\R^{2d}}\left(\frac{\rho_k^h(t,q,p)-\rho_{k-1}^h(q,p)}{h}\right)\varphi(t,q,p)dqdp
\\
=\int_{\R^{2d}}\left[\frac{p}{m}\cdot\nabla_q\varphi(t,q,p)-(\nabla V(q)+\gamma\nabla F(p))\cdot \nabla_p\varphi(t,q,p)+\gamma \kT \Delta_p\varphi(t,q,p)\right]\rho_k^h(q,p)dqdp
\\
+\theta_k(t), \label{approximation1}
\end{multline}
where
\begin{eqnarray}
|\theta_k(t)|&\leq&\frac{|\varepsilon_k|}{h}+Ch\big[W_h^*(\rho_{k-1}^h,\rho_k^h) + M_2(\rho_{k-1}^h) + M_2(\rho_k^h)+1\Big]\notag\\
&\stackrel{\eqref{sumM2Entropy},\eqref{epsilon}}\leq &
\frac Ch W_h^*(\rho_{k-1}^h,\rho_k^h)  + Ch.
\label{thetaform}
\end{eqnarray}
Note that $\theta_k$ depends on $t$ through the $t$-dependence of $\varphi$.
Next, from (\ref{approximation1}), for $k\geq 1$ we have
\begin{align}
&\int_{(k-1)h}^{kh}\int_{\R^{2d}}\left(\frac{\rho_k^h(q,p)-\rho_{k-1}^h(q,p)}{h}\right)\varphi(t,q,p)dqdpdt\nonumber
\\&\qquad=\int_{(k-1)h}^{kh}\int_{\R^{2d}}\left[\frac{p}{m}\cdot\nabla_q\varphi(t,q,p)-(\nabla V(q)+\gamma\nabla F(p))\cdot \nabla_p\varphi(t,q,p)+\gamma \kT \Delta_p\varphi(t,q,p)\right]\rho_k^h(q,p)dqdpdt\nonumber
\\&\qquad\qquad+ \int_{(k-1)h}^{kh}\theta_k(t)dt\nonumber
\\&\qquad=\int_{(k-1)h}^{kh}\int_{\R^{2d}}\left[\frac{p}{m}\cdot\nabla_q\varphi(t,q,p)-(\nabla V(q)+\gamma\nabla F(p))\cdot \nabla_p\varphi(t,q,p)+\gamma \kT \Delta_p\varphi(t,q,p)\right]\rho^h(t,q,p)dqdpdt\nonumber
\\&\qquad\qquad+ \int_{(k-1)h}^{kh}\theta_k(t)dt.\nonumber
\end{align}
Summing from $k=1$ to $K_h$ we obtain
\begin{align}
&\sum_{k=1}^{K_h}\int_{(k-1)h}^{kh}\int_{\R^{2d}}\left(\frac{\rho_k^h(q,p)-\rho_{k-1}^h(q,p)}{h}\right)\varphi(t,q,p)dqdpdt\nonumber
\\&\qquad=\int_0^T\int_{\R^{2d}}\left[\frac{p}{m}\cdot\nabla_q\varphi(t,q,p)-(\nabla V(q)+\gamma\nabla F(p))\cdot \nabla_p\varphi(t,q,p)+\gamma \kT \Delta_p\varphi(t,q,p)\right]\rho^h(t,q,p)dqdpdt\nonumber
\\&\qquad\qquad+ R_h,\label{approximationsum}
\end{align}
where
\begin{equation}
R_h=\sum_{k=1}^{K_h}\int_{(k-1)h}^{kh}\theta_k(t)dt.\label{Rh}
\end{equation}
By a discrete integration by parts, we can rewrite the left hand side of~\eqref{approximationsum} as
\begin{align}
&-\int_0^h\int_{\R^{2d}}\rho_0(q,p)\frac{\varphi(t,q,p)}{h}dqdpdt+\int_0^T\int_{\R^{2d}}\rho^h(t,q,p)\left(\frac{\varphi(t,q,p)-\varphi(t+h,q,p)}{h}\right)dqdpdt\label{approximationsumFINAL}.
\end{align}
From~\eqref{approximationsum} and~\eqref{approximationsumFINAL} we obtain
\begin{align}
&\int_0^T\int_{\R^{2d}}\rho^h(t,q,p)\left(\frac{\varphi(t,q,p)-\varphi(t+h,q,p)}{h}\right)dqdpdt\nonumber
\\&=\int_0^T\int_{\R^{2d}}\left[\frac{p}{m}\cdot\nabla_q\varphi(t,q,p)-(\nabla V(q)+\gamma\nabla F(p))\cdot \nabla_p\varphi(t,q,p)+\gamma \kT \Delta_p\varphi(t,q,p)\right]\rho^h(t,q,p)dqdpdt\nonumber
\\&\qquad+\int_0^h\int_{\R^{2d}}\rho_0(q,p)\frac{\varphi(t,q,p)}{h}dqdpdt+R_h. \label{weakconvergence}
\end{align}
Now $R_h\rightarrow 0$ as $h\to0$, since
\begin{eqnarray}
|R_h|\stackrel{\eqref{Rh}}\leq \sum_{k=1}^{K_h}\int_{(k-1)h}^{kh}|\theta_k(t)|dt&\stackrel{\eqref{thetaform}}\leq& C\sum_{k=1}^{K_h}\int_{(k-1)h}^{kh}\left(\frac{1}{h}W_h^*(\rho_{k-1}^h,\rho_k^h)+h\right)dt\nonumber
\\&=& C\sum_{k=1}^{K_h}\big[W_h^*(\rho_{k-1}^h,\rho_k^h)+Ch^2\big]
\stackrel{\eqref{summetric}}\leq Ch.\nonumber
\end{eqnarray}
Taking the limit $h\rightarrow0$ in~\eqref{weakconvergence} yields equation~\eqref{weakKReqn}.

\bigskip

For Scheme 2c, only~\eqref{timederivativeappro} is different:
\begin{align*}
&\int_{\R^{2d}}\big[\rho_k^h(q,p)-\rho_{k-1}^h(q,p)\big]\,\varphi(t,q,p)\,dqdp
\\&\qquad=\int_{\R^{2d}}\rho_k^h(q',p')\varphi(t,q',p')dq'dp'-\int_{\R^{2d}}\rho_{k-1}^h(q,p))\varphi(t,q,p)dqdp\nonumber
\\&\qquad=\int_{\R^{2d}}\rho_k^h(q',p')\varphi(t,q',p')dq'dp'-\int_{\R^{2d}}\mu_k^h(q,p)\varphi(t,\sigma_h(q,p))dqdp\nonumber
\\&\qquad=\int_{\R^{4d}}\Big[\varphi(t,q',p')-\varphi\Big(t,q-\frac{p}{m}h,p+\nabla V(q)h\Big)\Big]\,P_k^h(dqdpdq'dp') \nonumber
\\&\qquad=\int_{\R^{4d}}\left[(q'-q+\frac{p}{m}h)\cdot\nabla_{q'}\varphi(t,q',p')+(p'-p-\nabla V(q)h)\cdot\nabla_{p'}\varphi(t,q',p')\right]P_k^h(dqdpdq'dp') +\varepsilon_k,\nonumber
\end{align*}
where
\begin{equation*}
|\varepsilon_k|\leq C\int_{\R^{4d}}\left(\gamma^2\left|q'-q+\frac{p}{m}h\right|^2+|p'-p-\nabla V(q)h|^2\right)P_k^h(dqdpdq'dp')
\end{equation*}
with the constant $C$ depending only on $\varphi$.
Since $\abs{p'-p}^2, \abs{q'-q}^2\leq CC_h(q,p;q',p')$ and $\abs{\nabla V(q)}^2\leq C\abs{q}^2$,
\begin{align*}
\gamma^2\left|q'-q+\frac{p}{m}h\right|^2+|p'-p-h\nabla V(q)|^2&\leq 2\left(\gamma^2|q-q'|^2+\frac{\gamma^2h^2}{m^2}|p|^2+|p-p'|^2+h^2|\nabla V(q)|^2\right)
\\&\leq CC_h(q,p;q',p')+Ch^2N(q,p).
\end{align*}
Therefore
\begin{align*}
|\varepsilon_k|&\leq C\int_{\R^{4d}}\left[C_h(q,p;q',p')+h^2N(q,p)+h^2\right]P_k^h(dqdpdq'dp')\nonumber
\\&=CW_h(\mu_k^h,\rho_k^h)+CM_2(\mu_k^h)h^2+Ch^2\nonumber
\\&\leq CW_h(\mu_k^h,\rho_k^h)+ Ch^2.\label{varepsilon}
\end{align*}
The rest of the proof is the same.

\begin{center}
\textbf{Acknowledgement}
\end{center}
The research of the paper has received funding from the ITN ``FIRST" of the Seventh Framework Programme of the European Community (grant agreement number 238702).
%\begin{thebibliography}{99}
\bibliography{DPZdiscreteKramer}
\bibliographystyle{alpha}
\end{document}